\title{Edge Element Approximation for the Spherical Interface Dynamo System}
\author{ Junqing Chen\thanks{\footnotesize
		Department of Mathematical Sciences, Tsinghua University, Beijing
		100084, China. The work of this author was supported by National Key  R\&D Program of China 2019YFA0709600, 2019YFA0709602. (jqchen@tsinghua.edu.cn).}
		\and Ming Sun\thanks{\footnotesize Department of Mathematical Sciences, Tsinghua University, Beijing 100084, China. (sun-m21@mails.tsinghua.edu.cn).} }
\date{}
\newcommand{\bG}{{\bf G}}
\newcommand{\bU}{{\bf U}}
\newcommand{\bV}{{\bf V}}
\newcommand{\curl}{{\bf curl}}
\newcommand{\bdL}{{\boldsymbol{L}}}
\newcommand{\bdF}{{\boldsymbol{F}}}
\newcommand{\bdA}{{\boldsymbol{A}}}
\newcommand{\bdB}{{\boldsymbol{B}}}
\newcommand{\bdu}{{\boldsymbol{u}}}
\newcommand{\bdv}{{\boldsymbol{v}}}
\newcommand{\bdx}{{\boldsymbol{x}}}
\newcommand{\bdw}{{\boldsymbol{w}}}
\newcommand{\bdn}{{\boldsymbol{n}}}
 \newtheorem{theorem}{Theorem}[section] %
\newtheorem{lemma}[theorem]{Lemma} %
\newtheorem{remark}{Remark}[section] %
\numberwithin{equation}{section}
\begin{document}
	
	\maketitle
	
\begin{abstract}
Exploring the origin and properties of magnetic fields is crucial to the development of many fields such as physics, astronomy and meteorology. We focus on the edge element approximation and theoretical analysis of celestial dynamo system with quasi-vacuum boundary conditions. The system not only ensures that the magnetic field on the spherical shell is generated from the dynamo model, but also provides convenience for the application of the edge element method. We demonstrate the existence, uniqueness and stability of the solution to the system by the fixed point theorem. Then, we approximate the system using the edge element method, which is more efficient in dealing with electromagnetic field problems. Moreover, we also discuss the stability of the corresponding discrete scheme. And the convergence is demonstrated by later numerical tests. Finally, we simulate the three-dimensional time evolution of the spherical interface dynamo model, and the characteristics of the simulated magnetic field are consistent with existing work.		
\end{abstract}
\vskip 2ex\par\noindent\normalfont{\bfseries Keywords: }celestial magnetic fields, interface dynamo system, quasi-vacuum boundary conditions, edge element method

\par\noindent\normalfont{\bfseries Mathematics Subject Classification}(MSC2020): 65M60, 65M12

\section{Introduction}
It is known that magnetic fields exist in many celestial bodies such as Earth and Sun\cite{Kono2002,Zhang2003,Chan2006}. The magnetic fields are very important for our  living environment.  Therefore it is of  great significance to investigate the origin as well as the properties of celestial magnetic fields.

Until now, the dynamo theory is one of the more accepted hypotheses about the origin of celestial magnetic fields.
In 1919, Larmor first proposed the concept of the solar dynamo \cite{Larmor1919}.
Later, Elsasser proposed basic mathematical models for the terrestrial magnetic fields \cite{Elsasser1946a,Elsasser1946} and Parker developed the dynamo theory for the solar magnetics fields \cite{Parker1955}.
Steenbeck et al. provided  more detailed mathematical description of the solar dynamo model, proposing the mean-field dynamo model and introducing the concept of the $\alpha$-effect (i.e., small-scale magnetic field and velocity field interacting to produce the large-scale magnetic field) in 1966 \cite{Kono2002, Steenbeck1966}.
The model avoids the anti-generator theory, but causes the $\alpha$-quenching effect. % (i.e., the $\alpha$-effect is suppressed or even disappears, and thus the stellar dynamo stops operating, when the magnetic field energy density is much larger than the turbulent motion energy).
In 1993 , Parker proposed an interface dynamo model to describe the location and essence of the $\alpha$-effect, thus avoided the $\alpha$-quenching problem \cite{Parker1993a}.
In recent years, Zhang et al. developed a multilayered interface dynamo model \cite{Zhang2003,Chan2008}.  In that model,  the domain is divided into four regions and the generation mechanism of the magnetic field is presented. For more details of the dynamo theory, we suggest \cite{Hollerbach1996,Ossendrijver2003,Brandenburg2005,Jones2011} for reference.

The numerical simulation of the dynamo model on the three-dimensional sphere is very difficult due to its complexity as well as nonlinearity.
Currently, the popular methods are the spectral method using spherical harmonic functions and the finite element method \cite{Zhang1989,Glatzmaier1995,Chan2006,Chan2008,Cheng2020}.
However, the global nature of the spectral method and the slow Legendre transform make the method inefficient in dealing with the dynamo model with space-time variable parameters \cite{Chan2001,Chan2008}.
As for the finite element method, Zou, Zhang and their co-workers provided the mathematical theory of spherical interface dynamo as well as finite element approximation framework \cite{Chan2006,Zhang2003}.
However, to deal with the solenoidal condition, the model is recast to  a saddle point equation which is difficult to solve efficiently\cite{Zhang2003,Cheng2020}. 

It is well known that the N\'ed\'elec edge element method is very effective for solving electromagnetic field problems \cite{Nedelec1980,Nedelec1986,Biro1996,Zhong2012}.
As we will see in the next section, the spherical dynamo system is a nonlinear evolution system concerning the curl operator. Then it is nature to use the edge element to discretize the magnetic field.  However, to use the edge element method, we need suitable boundary and interface conditions. 
Fortunately, Gilman et al. proposed that quasi-vacuum boundary conditions are applicable to the solar dynamo problem because the observed magnetic field at the solar layer is always perpendicular to the surface \cite{Kageyama1997,Harder2005}. The quasi-vacuum condition allows the magnetic field to have only the radial component at the outer boundary.  It is just the homogeneous Dirichlet boundary condition in tangential directions. We further relax the interface conditions and only keep the tangential continuity. 
Finally, we have a new spherical interface dynamo model which is suitable for edge element approximation. With this model, it is possible for us to solve the spherical interface problem efficiently with fast algorithm such as \cite{Hiptmair2007}.  

We emphasize  that the new model is different with the one in \cite{Chan2006}  and it does not contain the divergence-free equation anymore. Then the existence and uniqueness of solution can not be proven by Galerkin methods since the corresponding function space is lack of necessary compact embedding property.  So in the present work, we prove the well-posedness of the new model by fixed point method.  We also propose a stable numerical scheme based on edge element discretization and confirm that the numerical solution of the new model is consistent with existing work.

The rest of the paper is organized as follows.
In Section \ref{sect2}, we introduce the interface dynamo system in detail and provides the theorems used later.
In Section \ref{sect3}, we discuss the weak formulation and well-posedness of the system by fixed point theorem.
We construct a fully discrete numerical scheme and present the stability, existence and uniqueness of the scheme in Section \ref{sect4}.
In Section \ref{sect5}  we show some numerical experiments to demonstrates the effectiveness of our method. Finally in Section \ref{sect6}, we draw some conclusion.

\section{Spherical interface dynamo system}\label{sect2}
In this section, we mainly introduce the spherical interface dynamo model with the quasi-vacuum boundary condition.
Meanwhile, some necessary preliminaries are provided at the end of this section for analysis in next sections.

\subsection{The model}

\begin{figure}[!htbp]
	\centering
	\includegraphics[width=0.6\textwidth]{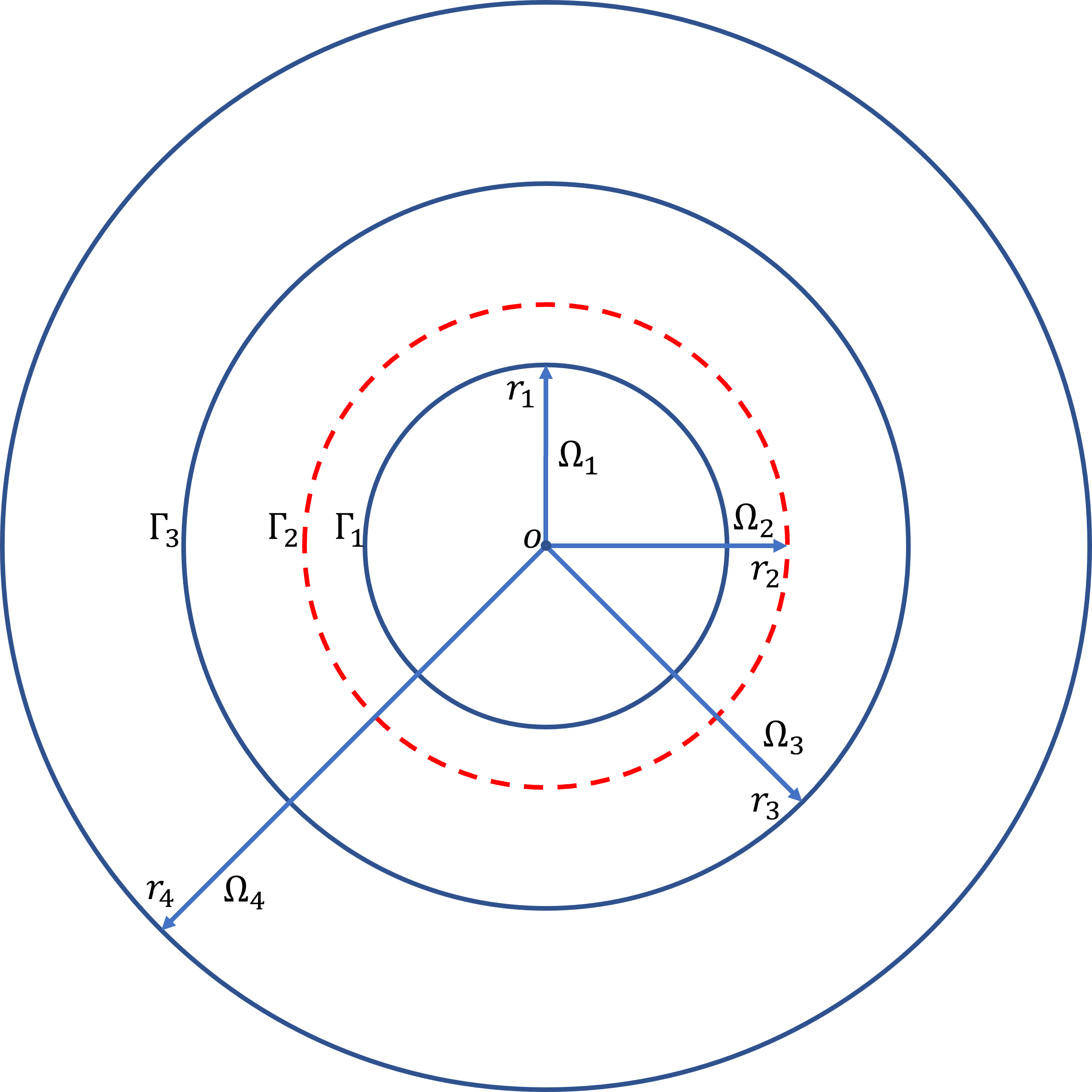}
	\caption{Physical domain of the interface dynamo model.   \label{fig:Domain} }
\end{figure}

We consider the interface dynamo model with the fully turbulent convection zone based on the assumptions in \cite{Zhang2003}.
According to the different mechanisms of magnetic field generation, the physical domain $\Omega$ of the model is divided into four regions $\Omega_i (i=1,2,3,4)$ as shown in Figure~\ref{fig:Domain}. We denote by $\bdB$ the magnetic field and $\bdB_i$ the magnetic field in $\Omega_i$ for $i=1,2,3,4$. 

The region $\Omega_3 (r_2 < r < r_3)$ represents the convection zone with the magnetic diffusivity $\beta_3(\bdx)$, where $r$ denotes the radius of the spherical domain.
Under the $\alpha$-effect, a weak magnetic field $\bdB_3$ is generated in this region, which is governed by 
\begin{equation}
	\frac{\partial \bdB_3}{\partial t}+\nabla \times(\beta_3(\bdx) \nabla \times \bdB_3)=R_{\alpha} \nabla \times\left(\frac{f(\bdx, t)}{1+\sigma|\bdB_3|^{2}} \bdB_3\right) \quad \text{ in } \quad \Omega_3 \times(0, T).       
	\label{B3}
\end{equation}
The right hand side nonlinear term $\frac{R_{\alpha} f(\bdx, t)}{1+\sigma|\bdB_3|^{2}}$ denotes the local $\alpha$-quenching effect, where $f(\bdx, t)$ is a model-oriented known function, $R_\alpha$ is the magnetic $\alpha$ Reynolds number, and $\sigma$ is a constant parameter. For details about the $\alpha$-effect, refer to \cite{Brandenburg1994,Choudhuri1995,Kuker2001} .

The external insulating region $\Omega_4 (r_3 < r < r_4)$ with a large magnetic diffusivity $\beta_4(\bdx)$ is outside the convection zone, which is almost electrically insulated and the magnetic field would not be generated.
The diffusion of the magnetic field $\bdB_4$ in this region is described by
\begin{equation}
	\frac{\partial \bdB_4}{\partial t}+\nabla \times(\beta_4(\bdx) \nabla \times \bdB_4)=0 \quad  \text{ in } \quad \Omega_4 \times (0, T).         
	\label{B4}
\end{equation}

Below the convection zone is the tachocline $\Omega_2 (r_1 < r < r_2)$ with strong differential rotation, which has the effect of enhancing the magnetic field.
In this region, the differential rotation shears the weak radial magnetic field generated by the convection zone and diffused into the tachocline, yielding the strong magnetic field $\bdB_2$ governed by
\begin{equation}
	\frac{\partial \bdB_2}{\partial t}+\nabla \times(\beta_2(\bdx) \nabla \times \bdB_2)=R_{m} \nabla \times(\bdu(\bdx, t) \times \bdB_2) \quad \text{ in } \quad \Omega_2 \times(0, T).        
	\label{B2}
\end{equation}
The parameters $R_m$ and $\beta_2(\bdx)$ correspond to the magnetic Reynolds number and magnetic diffusivity in the tachocline, respectively.
And the vector function $\bdu(\bdx, t)$ represents small scale turbulence.

The internal radiation zone $\Omega_1 (0 < r < r_1)$ is a uniformly rotating sphere, and the magnetic field cannot be generated in this zone. The diffusion process of the magnetic field $\bdB_1$ is governed by
\begin{equation}
	\frac{\partial \bdB_1}{\partial t}+\nabla \times(\beta_1(\bdx) \nabla \times \bdB_1)=0 \quad  \text{ in } \quad \Omega_1 \times (0, T),         
	\label{B1}
\end{equation}
where $\beta_1(\bdx)$ is the magnetic diffusivity.

We close the above system by applying initial and boundary conditions as follows
\begin{equation}
	\bdB(\bdx, 0)=\bdB_{0}(\bdx) \quad \text{ in } \quad \Omega,
	\label{CB1}
\end{equation}
\begin{equation}
	\bdB \times  \bdn = 0 \quad \text{ on } \quad \partial\Omega \times (0, T),
	\label{CB2}
\end{equation}
where $\bdn$ denotes the unit outward normal vector to the boundary $\partial\Omega$.
Here, \eqref{CB2} is just the quasi-vacuum boundary condition. Let $\Gamma_i (i=1,2,3)$ denote the interfaces between regions in the above system, respectively, as shown in Figure~\ref{fig:Domain}.
With the combination of the jump of magnetic diffusivity $\beta(\bdx)$ at interfaces, we impose physical jump conditions as follows 
\begin{equation}
	\left[\left(\beta(\bdx) \nabla \times \bdB\right) \times \bdn\right] =0, \quad \left[\bdB \times  \bdn\right] = 0 \quad \text{ on } \quad \left(\Gamma_1 \cup \Gamma_2 \cup \Gamma_3 \right) \times (0, T),
	\label{TY1}
\end{equation}
where $\left[A\right]$ means the jumps of $A$ across the interfaces.

Combining the system \eqref{B3}-\eqref{B1} in regions $\Omega_i (i=1,2,3,4)$, the initial and boundary conditions \eqref{CB1}-\eqref{CB2} and the physical jump conditions \eqref{TY1}, the spherical interface dynamo model can be written as follows
\begin{equation}\label{Z1}
	\begin{aligned}
		&\frac{\partial \bdB}{\partial t}+\nabla \times(\beta(\bdx) \nabla \times \bdB) \\
		&\qquad=R_{\alpha} \nabla \times\left(\frac{f(\bdx, t)}{1+\sigma|\bdB|^{2}} \bdB\right)+R_{m} \nabla \times(\bdu(\bdx, t) \times \bdB) \quad \text{ in } \quad \Omega \times(0, T),\\
		&\bdB(\bdx, 0)=\bdB_{0}(\bdx) \quad \text{ in } \quad \Omega,\\	
		&\bdB \times  \bdn = 0 \quad \text{ on } \quad \partial\Omega \times (0, T),
	\end{aligned}
\end{equation}
where $\beta(\bdx)$  is the piecewise smooth magnetic diffusivity and $0<\beta_0\leq\beta(\bdx)\leq \beta_M$.
By definition of the model, the function $f(\bdx, t)$ and the velocity field $\bdu(\bdx, t)$ are supported only in the convection zone $\Omega_3$ and the tachocline $\Omega_2$, respectively.  We also assume that $\bdu(\bdx, t)$ is nonslip on the boundary of the tachocline, which suggests that $f$ and $\bdu$ vanish on $\Gamma_1$, $\Gamma_2$, and $\Gamma_3$.

We remark that the system \eqref{Z1} is different with the one in \cite{Chan2006}. We omit the divergence free condition $\nabla\cdot\bdB(\bdx,t)=0, t>0$. To deal with this condition,  we assume that the initial data $\bdB_0(\bdx)$ is divergence free 
$$\nabla\cdot\bdB_0(\bdx)=0 \mbox{ in }\Omega.$$
In the next section, we will prove that, with this assumption, the solution to system \eqref{Z1} satisfies the divergence free condition automatically.
%%%%%%%%%%%%%%%%%%%%%%%%%%%%%%%%%%%%%%%%%%%%%%%%%%%%%%%%%%%%%%%%%%%%%%%%%%%%%%%%%%

%%%%%%%%%%%%%%%%%%%%%%%%%%%%%%%%%%%%%%%%%%%%%%%%%%%%%%%%%%%%%%%%%%%%%%%%%%%%%%%%%%

\subsection{Preliminaries}
For further analysis of the above system \eqref{Z1},  some mathematical notations as well as the auxiliary lemmas are collected in this subsection.
Let $H^1(\Omega)=\{u\in L^2(\Omega), \nabla u\in L^2(\Omega)^3\}$ be the usual Sobolev space and use $\boldsymbol{H}^1(\Omega)$ to denote $H^1(\Omega)^3$. 
Similarly, $L^2(\Omega)^3$ is denoted by $\boldsymbol{L}^2(\Omega)$.
The most frequently used spaces in this paper are the following Hilbert spaces
$$H\left(\curl;\Omega\right)=\left\{\bdA \in \boldsymbol{L}^2(\Omega);\;\;\nabla\times \bdA \in \boldsymbol{L}^2(\Omega)\right\},$$
$$H_0\left(\curl;\Omega\right)=\left\{\bdA \in \boldsymbol{L}^2(\Omega);\;\;\nabla\times \bdA \in \boldsymbol{L}^2(\Omega),\;\bdA \times \bdn = 0 \; \text{on} \; \partial\Omega\right\}$$
with graph norm
\begin{eqnarray*}
	\|\bdu\|_{H(\curl;\Omega)}=\left(\|\bdu\|^2_{\bdL^2(\Omega)}+\|\nabla\times \bdu\|^2_{\bdL^2(\Omega)}\right)^{1/2}.
\end{eqnarray*}
It is known that $H(\curl;\Omega)$ and $H_0(\curl;\Omega)$ are separable \cite{Ciarlet2018}.
Usually, the notation $\left(\cdot , \cdot\right)$ denotes the scalar product in $L^2(\Omega)$ or $\boldsymbol{L}^2(\Omega)$, while $\left\langle \cdot , \cdot \right\rangle$ is used to denote the dual pairing between any two Hilbert spaces. 
For simplification of the later discussions, we write $\bV = H_0\left(\curl;\Omega\right)$, write $\left\| \cdot \right\|^{2}_{\beta} \triangleq \left(\beta \cdot , \cdot\right)$ for a non-negative function $\beta(\bdx)$, and let $\left\| \cdot \right\|^{2} \triangleq \left(\cdot , \cdot\right)$.  We also denote the dual space of $\bV$ by $\bU$ in the following.
%And, the functions on $\bdx$ and $t$ are written as functions on $t$ only in the later sections.

Subsequently, we recall some technique results to support the theoretical analysis later.
\begin{lemma}[Young's inequality with $\varepsilon$ \cite{Evans2010}]
	For any $a,b \geq 0$ and $\varepsilon > 0$, suppose $1 < p, q < \infty$ and $\frac{1}{p} + \frac{1}{q} = 1$, then
	\begin{equation*}
		a b \leq \frac{\varepsilon}{p}a^p  + \frac{\varepsilon^{-q/p}}{q}b^q.
	\end{equation*}
	%the equality sign holds if and only if $a^p = b^q$.
\end{lemma}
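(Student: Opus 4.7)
The plan is to reduce the $\varepsilon$-weighted inequality to the classical Young inequality $xy \leq \frac{x^p}{p}+\frac{y^q}{q}$ via a simple rescaling trick. First I would establish (or just cite) the classical form. The cleanest derivation is from the convexity of the exponential function: assuming $a,b>0$ (the cases with a zero factor are trivial), write $x^p = a^p$ and $y^q = b^q$, so that $ab = e^{\ln(a^p)/p + \ln(b^q)/q}$, and then convexity of $\exp$ combined with the conjugacy relation $\frac{1}{p}+\frac{1}{q}=1$ gives
$$ab = \exp\!\left(\tfrac{1}{p}\ln a^p + \tfrac{1}{q}\ln b^q\right) \leq \tfrac{1}{p}e^{\ln a^p} + \tfrac{1}{q}e^{\ln b^q} = \tfrac{a^p}{p} + \tfrac{b^q}{q}.$$

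For the parametrized form in the statement, I would introduce the rescaled quantities $\tilde a = \varepsilon^{1/p} a$ and $\tilde b = \varepsilon^{-1/p} b$, noting that $\tilde a \tilde b = ab$. Applying the classical Young inequality to $\tilde a$ and $\tilde b$ yields
$$ab = \tilde a\,\tilde b \;\leq\; \frac{\tilde a^{\,p}}{p} + \frac{\tilde b^{\,q}}{q} \;=\; \frac{\varepsilon\, a^p}{p} + \frac{\varepsilon^{-q/p}\, b^q}{q},$$
which is exactly the desired inequality.

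There is essentially no obstacle here; the whole argument is a single substitution once the unweighted inequality is in hand. The only point worth double-checking is the bookkeeping on the exponent of $\varepsilon$ in the second term: since the Hölder conjugacy gives $q/p = q-1$, one could equivalently display the constant as $\varepsilon^{1-q}/q$, but the form $\varepsilon^{-q/p}/q$ is the one that plugs most naturally into the energy estimates that will appear in later sections of the paper.
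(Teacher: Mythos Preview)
Your proof is correct and is the standard derivation. Note, however, that the paper does not actually prove this lemma: it is stated as a cited auxiliary result from \cite{Evans2010} with no proof given. So there is no ``paper's own proof'' to compare against; your argument simply supplies what the paper omits, and the convexity-plus-rescaling route you take is exactly the one found in the cited reference.
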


%\begin{lemma}[Integration by parts formula] For $\bdB \in H\left(\curl;\Omega\right)$, $\bdA \in \boldsymbol{H}^1\left(\Omega\right)$, we have
%$$
%\int_{\Omega}(\nabla \times \bdB) \cdot \bdA d \bdx=\int_{\Omega} \bdB \cdot(\nabla \times \bdA) d \bdx-\int_{\partial \Omega}(\bdB \times \bdn) \cdot \bdA d s
%$$
%\end{lemma}

\begin{lemma}[Integration by parts \cite{Monk2003}] 
	Let $\Omega$ be a bounded Lipschitz domain in $\mathbb{R}^3$.
	The mapping $\gamma_t:\bdv \rightarrow \bdn \times \bdv|_{\partial\Omega}$ can be extended by continuity to a continuous linear map from $H\left(\curl;\Omega\right)$ into $(H^{-1/2}\left(\partial\Omega\right))^3$, where $\bdn$ is the unit outward normal to $\Omega$.
	Then, the following Green's theorem holds:
	$$
	\left(\nabla \times \bdv,\phi\right) - \left(\bdv,\nabla \times \phi\right) = \langle\gamma_t(\bdv),\phi\rangle_{\partial\Omega}, \qquad \forall \, \bdv \in H\left(\curl;\Omega\right), \,  \forall \, \phi \in \boldsymbol{H}^1\left(\Omega\right).
	$$
\end{lemma}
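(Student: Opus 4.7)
The plan is to establish the lemma through a density argument combined with a duality construction of the extended trace. First I would verify the identity for smooth test fields: for $\bdv, \phi \in C^\infty(\overline{\Omega})^3$, expanding $\nabla\cdot(\bdv\times\phi) = \phi\cdot(\nabla\times\bdv) - \bdv\cdot(\nabla\times\phi)$ and applying the classical divergence theorem on the Lipschitz domain $\Omega$ yields
$$(\nabla \times \bdv, \phi) - (\bdv, \nabla \times \phi) = \int_{\partial\Omega} (\bdn \times \bdv) \cdot \phi \, dS,$$
i.e.\ the target identity with $\gamma_t(\bdv) = \bdn\times\bdv|_{\partial\Omega}$ in the pointwise sense.

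Next I would establish that $C^\infty(\overline{\Omega})^3$ is dense in $H(\curl;\Omega)$. This step is what I expect to be the main obstacle. The standard argument uses a partition of unity subordinate to a Lipschitz atlas of $\partial\Omega$, local flattening of the boundary via bi-Lipschitz charts, reflection/extension to a slightly enlarged domain, and mollification; care is required because Lipschitz boundaries are not $C^1$, and one must verify that the construction preserves the curl norm, not merely the $\boldsymbol{L}^2$ norm. Granted the density, I would then define $\gamma_t$ by duality. Given $\mu \in (H^{1/2}(\partial\Omega))^3$, pick a lifting $\phi_\mu \in \boldsymbol{H}^1(\Omega)$ with $\phi_\mu|_{\partial\Omega} = \mu$ and $\|\phi_\mu\|_{\boldsymbol{H}^1(\Omega)} \le C\|\mu\|_{(H^{1/2}(\partial\Omega))^3}$ supplied by the scalar trace theorem, and set
$$\langle \gamma_t\bdv, \mu\rangle_{\partial\Omega} := (\nabla\times\bdv,\phi_\mu) - (\bdv,\nabla\times\phi_\mu).$$
Independence of the lifting follows by showing that the right-hand side vanishes whenever $\phi_\mu \in \boldsymbol{H}^1_0(\Omega)$; this reduces via approximation of $\phi_\mu$ in $C_c^\infty(\Omega)^3$ and approximation of $\bdv$ in smooth fields, then invoking the smooth identity above whose boundary term vanishes on the compactly supported $\phi_\mu$. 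Continuity is immediate from Cauchy--Schwarz:
$$|\langle \gamma_t\bdv,\mu\rangle| \le \|\bdv\|_{H(\curl;\Omega)}\|\phi_\mu\|_{\boldsymbol{H}^1(\Omega)} \le C\|\bdv\|_{H(\curl;\Omega)}\|\mu\|_{(H^{1/2}(\partial\Omega))^3},$$
so $\gamma_t$ extends continuously to a map $H(\curl;\Omega) \to (H^{-1/2}(\partial\Omega))^3$.

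Finally, the Green's formula for general $\bdv \in H(\curl;\Omega)$ and $\phi \in \boldsymbol{H}^1(\Omega)$ follows by definition: $\phi$ is itself a valid lifting of its own trace $\phi|_{\partial\Omega} \in (H^{1/2}(\partial\Omega))^3$, so substituting $\mu = \phi|_{\partial\Omega}$ and $\phi_\mu = \phi$ into the defining formula of $\langle \gamma_t\bdv, \mu\rangle$ reproduces exactly the stated identity. Everything beyond the density step is essentially bookkeeping; the genuine work is the smooth approximation result for $H(\curl;\Omega)$ on a Lipschitz domain.
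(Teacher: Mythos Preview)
Your sketch is correct and follows the standard route for this result. Note, however, that the paper does not supply its own proof of this lemma: it is quoted as a preliminary and attributed to Monk's monograph. The argument you outline---classical Green's identity for smooth fields via the divergence theorem applied to $\bdv\times\phi$, density of $C^\infty(\overline{\Omega})^3$ in $H(\curl;\Omega)$ on a Lipschitz domain, and then definition of $\gamma_t$ by duality against liftings from $(H^{1/2}(\partial\Omega))^3$---is precisely the standard proof one finds in that reference, so there is nothing to compare. Your identification of the density step as the only substantive point is accurate; everything else is, as you say, bookkeeping.
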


\begin{lemma}[Gronwall's inequality (differential form) \cite{Evans2010}]\label{gronwall}
	Let $g(t)$ be a nonnegative, absolutely continuous function on $[0,T]$ which satisfies the following inequality
	\begin{eqnarray*}
		g^\prime(t)\leq \phi(t) g(t)+\psi(t)\quad \mbox{ a.e.} \; t \in[0, T],
	\end{eqnarray*}
	where $\phi(t)$ and $\psi(t)$ is nonnegative summable functions on $[0, T]$. Then, we have the following estimate
	\begin{eqnarray*}
		g(t)\leq e^{\int^t_0 \phi(s) d s}\left[g(0)+\int^t_0\psi(\tau)d\tau\right]\quad \forall \; t \in[0, T].
	\end{eqnarray*}
\end{lemma}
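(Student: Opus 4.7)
The plan is to apply the classical integrating-factor technique. First, I would introduce the auxiliary function
\begin{equation*}
G(t)=g(t)\exp\left(-\int_0^t \phi(s)\,ds\right),
\end{equation*}
which, as the product of an absolutely continuous function $g$ with a $C^1$ function of $t$, is itself absolutely continuous on $[0,T]$ and hence differentiable almost everywhere.

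Next, I would differentiate $G$ via the product rule and substitute the hypothesis $g'(t)\leq \phi(t)g(t)+\psi(t)$ to obtain
\begin{equation*}
G'(t)=\big(g'(t)-\phi(t)g(t)\big)\exp\left(-\int_0^t \phi(s)\,ds\right)\leq \psi(t)\exp\left(-\int_0^t \phi(s)\,ds\right)\quad\text{a.e.\ on }[0,T].
\end{equation*}
Integrating from $0$ to $t$ and using $G(0)=g(0)$ would then yield
\begin{equation*}
g(t)\exp\left(-\int_0^t \phi(s)\,ds\right)\leq g(0)+\int_0^t \psi(\tau)\exp\left(-\int_0^\tau \phi(s)\,ds\right)d\tau.
\end{equation*}

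Finally, since $\phi\geq 0$ we have $\exp(-\int_0^\tau \phi(s)\,ds)\leq 1$ on $[0,t]$, so I would drop this factor inside the integral and then multiply both sides by $\exp(\int_0^t \phi(s)\,ds)$ to recover the stated bound. The argument presents no serious obstacle, as this is a classical result; the only mild subtlety is the measure-theoretic bookkeeping, namely justifying that $G$ is absolutely continuous so that the fundamental theorem of calculus applies, and that the product-rule computation of $G'$ is valid almost everywhere. Both facts follow directly from the absolute continuity of $g$ combined with the smoothness of the exponential factor, and the non-negativity of $\phi$ is crucial in the very last step to pass from the sharper convolution-type bound to the simpler bound stated in the lemma.
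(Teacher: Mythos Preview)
Your argument is correct and is the standard integrating-factor proof of the differential Gronwall inequality. The paper itself does not supply a proof of this lemma; it is simply quoted as a preliminary result with a citation to Evans, so there is no ``paper's own proof'' to compare against. Your write-up is exactly the classical argument one finds in such references.
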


\begin{lemma}[Discrete Gronwall's inequality \cite{Liao2010}]
	Assume that $\left\{\psi^k\right\}^{\infty}_{k=0}$ is a non-negative sequence, $\Phi^0 \geq 0$, and $\left\{F^k\right\}^{\infty}_{k=0}$ is a non-negative increasing sequence, which satisfies
	$$ 
	F^k \leq \Phi^0 +  c \tau \displaystyle\sum_{l=0}^{k-1}F^l +  c \tau \displaystyle\sum_{l=0}^{k}\psi^l, \quad k = 0,1,2,\cdots, 
	$$
	then we have
	$$ 
	F^k \leq e^{c k \tau} \left(\Phi^0 +  c \tau \displaystyle\sum_{l=0}^{k}\psi^l \right) , \quad k = 0,1,2,\cdots.
	$$	
\end{lemma}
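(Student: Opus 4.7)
The plan is to proceed by induction on $k$, introducing the auxiliary majorant $G^k := \Phi^0 + c\tau \sum_{l=0}^{k} \psi^l$ so that the hypothesis reads $F^k \leq G^k + c\tau \sum_{l=0}^{k-1} F^l$. Because $\psi^l \geq 0$, the sequence $\{G^k\}$ is non-decreasing in $k$, and this monotonicity is exactly what I will lean on when feeding the inductive hypothesis back into the recurrence.

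The base case $k=0$ is immediate: the sum $\sum_{l=0}^{-1}F^l$ is empty, so the hypothesis reduces to $F^0 \leq G^0 = e^{0}\,G^0$, which is the claimed conclusion at $k=0$. For the inductive step, I assume $F^l \leq e^{cl\tau}\,G^l$ for every $0 \leq l \leq k-1$. Substituting into the hypothesis and invoking $G^l \leq G^k$ (monotonicity), I get $F^k \leq G^k\bigl(1 + c\tau \sum_{l=0}^{k-1} e^{cl\tau}\bigr)$. The inner sum is a geometric series equal to $(e^{ck\tau}-1)/(e^{c\tau}-1)$; multiplying by $c\tau$ and using the elementary inequality $e^{c\tau}-1 \geq c\tau$ yields $c\tau \sum_{l=0}^{k-1} e^{cl\tau} \leq e^{ck\tau}-1$, whence $F^k \leq e^{ck\tau}\,G^k$, closing the induction.

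The only step needing any attention is the geometric-sum bound, but it is simply the discrete counterpart of the convexity inequality $1+x \leq e^{x}$, so there is no genuine obstacle. An equally short alternative is to set $T_k := c\tau \sum_{l=0}^{k-1} F^l$, observe that the hypothesis produces the linear recurrence $T_{k+1} \leq (1+c\tau)\,T_k + c\tau\,G^k$ with $T_0 = 0$, iterate it to get $T_{k+1} \leq c\tau \sum_{j=0}^{k}(1+c\tau)^{k-j} G^j$, and then use monotonicity of $G^j$ plus $(1+c\tau)^{k+1} \leq e^{c(k+1)\tau}$ to conclude $F^{k+1} \leq G^{k+1} + T_{k+1} \leq e^{c(k+1)\tau}\,G^{k+1}$. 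Either presentation settles the claim in a few lines, which is consistent with the classical nature of this discrete Gronwall lemma.
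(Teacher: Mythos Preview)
Your proof is correct. The induction with the auxiliary majorant $G^k$ works cleanly, and the geometric-sum bound via $e^{c\tau}-1\ge c\tau$ is the right way to close it; the alternative route through $T_k$ is also sound. One minor observation: your argument never uses the hypothesis that $\{F^k\}$ is increasing, so you have in fact proved a slightly stronger statement.

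As for comparison with the paper: the paper does not prove this lemma at all. It is stated in the preliminaries as a cited result from \cite{Liao2010} and then invoked as a black box in the stability analysis of the discrete scheme. So there is no ``paper's own proof'' to compare against; your argument simply fills in what the authors chose to quote.
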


\begin{theorem}[Banach's fixed point theorem \cite{Evans2010}]\label{fpt} Let $X$ be a Banach space with norm $\|\cdot\|$.  Assume 
	$$S: X\rightarrow X$$
	is a nonlinear mapping, and suppose that 
	$$\|S(u)-S(v)\|\leq \gamma \|u-v\|~(u,v \in X)$$
	for some constant $\gamma<1$. Then $S$ has a unique fixed point. 
\end{theorem}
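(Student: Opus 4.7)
The plan is to construct the fixed point via Picard iteration and exploit completeness of $X$. First I would pick an arbitrary starting point $u_0\in X$ and define the sequence $u_{n+1}=S(u_n)$ for $n\geq 0$. The immediate consequence of the contraction hypothesis is the telescoping estimate $\|u_{n+1}-u_n\|=\|S(u_n)-S(u_{n-1})\|\leq \gamma\|u_n-u_{n-1}\|$, which by induction yields $\|u_{n+1}-u_n\|\leq \gamma^n\|u_1-u_0\|$.

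Next I would show $\{u_n\}$ is Cauchy. For $m>n$, the triangle inequality and the geometric bound give
\begin{equation*}
\|u_m-u_n\|\leq \sum_{k=n}^{m-1}\|u_{k+1}-u_k\|\leq \|u_1-u_0\|\sum_{k=n}^{m-1}\gamma^k\leq \frac{\gamma^n}{1-\gamma}\|u_1-u_0\|,
\end{equation*}
which tends to $0$ as $n\to\infty$ because $\gamma<1$. Since $X$ is a Banach space, $u_n$ converges to some $u\in X$. The contraction property forces $S$ to be (Lipschitz) continuous, so passing to the limit in $u_{n+1}=S(u_n)$ gives $u=S(u)$, i.e., $u$ is a fixed point.

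For uniqueness, suppose $u$ and $v$ are both fixed points. Then $\|u-v\|=\|S(u)-S(v)\|\leq \gamma\|u-v\|$, so $(1-\gamma)\|u-v\|\leq 0$, and since $\gamma<1$ this forces $u=v$. The only subtle step is verifying that the Cauchy estimate is uniform enough to pass to the limit; however, the geometric series bound above makes this routine, so I do not anticipate any real obstacle — the proof is essentially a direct application of completeness together with the contractive Lipschitz constant.
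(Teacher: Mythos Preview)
Your proof is correct and is the standard Picard iteration argument. Note, however, that the paper does not actually prove this theorem: it is stated in the Preliminaries as a classical result cited from \cite{Evans2010}, with no proof given. Your argument is exactly the textbook proof one would find in that reference, so there is nothing to compare.
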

%%%%%%%%%%%%%%%%%%%%%%%%%%%%%%%%%%%%%%%%%%%%%%%%%%%%%%%%%%%%%%%%%%%%%%%%%%%%%%%%%%

%%%%%%%%%%%%%%%%%%%%%%%%%%%%%%%%%%%%%%%%%%%%%%%%%%%%%%%%%%%%%%%%%%%%%%%%%%%%%%%%%%
\section{The weak formulation and well-posedness}\label{sect3}

In this section, we introduce the weak formulation of the spherical interface dynamo system \eqref{Z1} and consider the well-posedness of the problem.
The weak formulation is  as follows:

For any $\bdA \in \bV$, find $\bdB \in  L^{2}(0, T; \bV)$ and $\bdB^\prime\in L^2(0,T; \bU)$ such that 
%$\bdB(0)=\bdB_{0}$ and for almost all $t \in(0, T)$,
\begin{equation}
	\begin{aligned}
		\left<\bdB^{\prime}, \bdA\right>+\left(\beta \nabla \times \bdB, \nabla \times \bdA\right) =  R_{\alpha}\left(\frac{f}{1+\sigma|\bdB|^{2}} \bdB, \nabla \times \bdA\right)+R_{m}\left(\bdu \times \bdB, \nabla \times \bdA\right)
	\end{aligned}
	\label{BF1}
\end{equation}
provided that $\bdB(\bdx, 0)=\bdB_{0}(\bdx)$, $\nabla\cdot\bdB_0(\bdx)=0$, $f(\bdx,t) \in H^1(0,T;L^\infty(\Omega))$ and $\bdu(\bdx,t) \in H^1(0,T;\bdL^\infty(\Omega))$.
In the rest of this section, we prove the well-posedness of the problem \eqref{BF1} step by step.
%%%%%%%%%%%%%%%%%%%%%%%%%%%%%%%%%%%%%%%%%%%%%%%%%%%%%%%%%%%%%%%%%%%%%%%%%%%%%%%%%%%
\subsection{The well-posedness of linear system}
To prove the well-posedness of the nonlinear problem, we need firstly the well-posedness of the corresponding linear system
\begin{equation}
	\begin{aligned}
		&\frac{\partial \bdB}{\partial t}+\nabla \times(\beta(\bdx) \nabla \times \bdB) =\bdF \quad \text{ in } \quad \Omega \times(0, T),\\
		&\bdn\times\bdB(\bdx,t)=0 \quad \text{ on } \quad \partial\Omega\times(0,T),\\
		&\bdB(\bdx,0)=\bdB_0(x) \quad \text{ in } \quad \Omega,
	\end{aligned}\label{linsys}
\end{equation}
with $\bdF\in L^2(0,T; \bdL^2(\Omega))$ and $\nabla\cdot \bdB_0=0$. 

Since $\bV=H_0(\curl;\Omega)$ is a separable Hilbert space, we can prove the well-posedness of system \eqref{linsys} by Galerkin method. First, for $k=1,2,\cdots$, we select smooth function $\bdw_k=\bdw_k(\bdx)$ such that
$\{\bdw_k\}_{k=1}^\infty$ is an orthonormal basis of $H_0(\curl;\Omega)$
and $\bdL^2(\Omega)$. 
%For example, we can choose $\bdw_k$ as the eigenfunction of double curl operator $\nabla\times\nabla\times$ in $H_0(\curl;\Omega)$. 
Then, for any positive integer $m$ and $k=1,2,\cdots,m$, we let 
$\bdB_m(\bdx,t)=\sum^m_{k=1}b^k_m(t)\bdw_k(\bdx)$ such that 
\begin{equation}
	(\bdB^\prime_m,\bdw_k)+(\beta(\bdx)\nabla\times\bdB_m,\nabla\times \bdw_k)=(\bdF,\bdw_k),
	\label{galerkin1}
\end{equation}
and
\begin{equation}
	b^k_m(0)=(\bdB_0,\bdw_k).
	\label{galerkin2}
\end{equation}
Or equivalently, we have the following ordinary differential system
\begin{equation}
	\frac{d b^k_m(t)}{dt}+\sum^m_{l=1}a_{kl}b^l_m(t)=f_k(t), \quad k=1,2,\cdots,m\label{ode1}
\end{equation}
with initial conidtion \eqref{galerkin2}. Where $a_{kl}=(\beta(\bdx)\nabla\times\bdw_l,\bdw_k), f_k(t)=(\bdF,\bdw_k)$. According to the standard theory of ordinary differential equations, \eqref{galerkin2}-\eqref{ode1} have a unique solution $(d^1_m(t),d^2_m(t),...,d^m_m(t))$, then we have a unique $\bdB_m(x,t)$ solving \eqref{galerkin1}-\eqref{galerkin2} .
\begin{remark}
	The orthonormal basis $\{\bdw_k\}_{k=1}^\infty$ can be constructed as follows.  As it is known \cite{Monk2003}, for simply connected domain $\Omega$, the kernel space of curl operator $\nabla\times$ in $H_0(\curl;\Omega)$ consists of the gradient of all the functions in $H^1_0(\Omega)$.  That's to say, the double curl operator has an infinite dimensional eigenspace corresponding to eigenvalue 0 \cite{Boffi1999}. Since $H^1_0(\Omega)$ is also a separable Hilbert space, it has an orthonormal basis which  can be chosen as the eigenfunctions of Laplace operator $-\Delta$ in $H^1_0(\Omega)$. Then the orthonormal basis of $H_0(\curl;\Omega)$ can be constructed by collecting the eigenfunctions of $\nabla\times\nabla\times$ in $H_0(\curl;\Omega)$  corresponding to the nonzero eigenvalues and gradient of the orthonormal basis of $H^1_0(\Omega)$. 
\end{remark}
For the solution $\bdB_m$, we have the following estimates.
\begin{lemma}\label{estBm}
	The unique solution $\bdB_m(\bdx,t)$ to problem \eqref{galerkin1}-\eqref{galerkin2} satisfies the following estimate
	\begin{eqnarray}
		\|\bdB_m\|^2_{L^2(0,T; \bV)}\leq C\left(\|\bdB_0\|^2_{\bdL^2(\Omega)}+\|\bdF\|^2_{L^2(0,T;\bdL^2(\Omega))}\right),\label{est1}\\
		\|\bdB_m^\prime\|^2_{L^2(0,T; \bU)}\leq C\left(\|\bdB_0\|^2_{\bdL^2(\Omega)}+\|\bdF\|^2_{L^2(0,T;\bdL^2(\Omega))}\right),\label{est2}
	\end{eqnarray}
	where the constant $C$ depends only on $\beta_0$ and $T$.
\end{lemma}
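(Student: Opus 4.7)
The plan is to establish \eqref{est1} and \eqref{est2} by the standard energy method for Galerkin approximations, using Young's inequality and Lemma \ref{gronwall}, together with the structural property of the basis $\{\bdw_k\}$ spelled out in the remark.

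For \eqref{est1}, I would multiply \eqref{galerkin1} by $b_m^k(t)$ and sum over $k=1,\ldots,m$ to obtain the energy identity
\begin{eqnarray*}
	\frac{1}{2}\frac{d}{dt}\|\bdB_m\|^2+\|\nabla\times\bdB_m\|_\beta^2=(\bdF,\bdB_m).
\end{eqnarray*}
Young's inequality with $\varepsilon$ bounds the right-hand side by $\tfrac{1}{2}\|\bdF\|^2+\tfrac{1}{2}\|\bdB_m\|^2$; dropping the nonnegative curl term and applying Lemma \ref{gronwall} to $g(t)=\|\bdB_m(t)\|^2$ gives a pointwise bound $\|\bdB_m(t)\|^2\le e^T\bigl(\|\bdB_0\|^2+\|\bdF\|^2_{L^2(0,T;\bdL^2(\Omega))}\bigr)$. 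Integrating the energy identity over $(0,T)$ and using the uniform lower bound $\beta(\bdx)\ge\beta_0$ together with this pointwise estimate then controls $\int_0^T\|\nabla\times\bdB_m\|^2\,dt$, producing \eqref{est1}.

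For \eqref{est2}, I would take an arbitrary $\bdA\in\bV$ and let $\bdA_m^{\|}$ denote its projection onto $V_m:=\mathrm{span}\{\bdw_1,\ldots,\bdw_m\}$. Since $\bdB_m^\prime\in V_m$, the Galerkin equation \eqref{galerkin1} gives
\begin{eqnarray*}
	\langle\bdB_m^\prime,\bdA\rangle=(\bdB_m^\prime,\bdA_m^{\|})=(\bdF,\bdA_m^{\|})-(\beta\nabla\times\bdB_m,\nabla\times\bdA_m^{\|}).
\end{eqnarray*}
By Cauchy--Schwarz and $\beta(\bdx)\le\beta_M$ this is dominated by $C(\|\bdF\|+\|\nabla\times\bdB_m\|)\|\bdA_m^{\|}\|_{\bV}$. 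Squaring, integrating in time, and invoking \eqref{est1} to absorb $\|\nabla\times\bdB_m\|_{L^2(0,T;\bdL^2)}^2$ then yields \eqref{est2}, provided one has the stability bound $\|\bdA_m^{\|}\|_{\bV}\le\|\bdA\|_{\bV}$.

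The one non-routine point is exactly this projection-stability bound. This is where the specific construction described in the remark enters: since $\{\bdw_k\}$ is assembled from the $L^2$-orthonormal eigenfunctions of $\nabla\times\nabla\times$ corresponding to nonzero eigenvalues together with $L^2$-normalized gradients of Laplace eigenfunctions in $H_0^1(\Omega)$, the family is simultaneously orthogonal in $\bdL^2(\Omega)$ and in the curl seminorm $(\nabla\times\cdot,\nabla\times\cdot)$ (the gradient members have vanishing curl, and within the double-curl eigenspace one has $(\nabla\times\bdw_i,\nabla\times\bdw_j)=\lambda_i\delta_{ij}$). Consequently the $\bdL^2$-orthogonal projection onto $V_m$ contracts both $\|\cdot\|_{\bdL^2(\Omega)}$ and $\|\nabla\times\cdot\|_{\bdL^2(\Omega)}$, which gives $\|\bdA_m^{\|}\|_{\bV}\le\|\bdA\|_{\bV}$ and closes the argument.
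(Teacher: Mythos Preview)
Your proposal is correct and follows essentially the same route as the paper: the energy identity with $\bdB_m$ as test function plus Gronwall for \eqref{est1}, and the $\bdL^2$-orthogonal projection onto $V_m$ together with the $\bV$-stability of that projection for \eqref{est2}. Your discussion of why the $\bdL^2$-projection contracts the curl seminorm is in fact more explicit than the paper's, which simply cites ``the choice of $\{\bdw_k\}$'' for the bound $\|\bdv_1\|_{\bV}\le\|\bdv\|_{\bV}$; your eigenbasis argument is exactly what underlies that line.
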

\begin{proof}
	From \eqref{galerkin1}, we have that 
	\begin{eqnarray*}
		(\bdB_m^\prime,\bdB_m)+(\beta\nabla\times \bdB_m,\nabla\times\bdB_m)=(\bdF,\bdB_m).
	\end{eqnarray*}
	By the assumption on $\beta(\bdx)$, we obtain 
	\begin{eqnarray*}
		\beta_0\|\bdB_m\|^2_{H_0(\curl;\Omega)}\leq (\beta(\bdx)\nabla\times\bdB_m,\nabla\times\bdB_m)+\beta_0\|\bdB_m\|^2_{\bdL^2(\Omega)}
	\end{eqnarray*}
	for $0\leq t\leq T$ and any positive integer $m$. Then we can derive that
	\begin{eqnarray*}
		\frac{1}{2}\frac{d}{dt}\|\bdB_m\|^2_{\bdL^2(\Omega)}+\beta_0\|\bdB_m\|^2_{H(\curl;\Omega)}\leq \beta_0\|\bdB_m\|^2_{\bdL^2(\Omega)}+(\bdF,\bdB_m)
	\end{eqnarray*}
	Since $(\bdF,\bdB_m)\leq 1/2\|\bdF\|^2_{\bdL^2(\Omega)}+1/2\|\bdB_m\|^2_{\bdL^2(\Omega)}$,  we have 
	\begin{eqnarray*}
		\frac{d}{dt}\|\bdB_m\|^2_{\bdL^2(\Omega)}\leq (2\beta_0+1)\|\bdB_m\|^2_{\bdL^2(\Omega)}+\|\bdF\|^2_{\bdL^2(\Omega)}\label{dbBm}
	\end{eqnarray*}
	and
	\begin{eqnarray}
		\|\bdB_m\|^2_{H(\curl;\Omega)}\leq (1+\frac{1}{2\beta_0})\|\bdB_m\|^2_{\bdL^2(\Omega)}+\frac{1}{2\beta_0}\|\bdF\|^2_{\bdL^2(\Omega)}.\label{dbBm0}
	\end{eqnarray}
	Then by the Gronwall's inequality in differential form in Lemma \ref{gronwall}, we have 
	\begin{eqnarray*}
		\|\bdB_m(t)\|^2_{\bdL^2(\Omega)}\leq e^{(2\beta_0+1)t}\left(\|\bdB_m(0)\|^2_{\bdL^2(\Omega)}+\int^t_0\|\bdF(\tau)\|^2_{\bdL^2(\Omega)}d\tau\right).
	\end{eqnarray*}
	By the choice of $\{\bdw_k\}\left(k=1,2,\cdots\right)$, we know that $\|\bdB_m(0)\|^2_{\bdL^2(\Omega)}\leq \|\bdB_0\|^2_{\bdL^2(\Omega)}$, then for any $0\leq t\leq T$, 
	\begin{eqnarray}
		\|\bdB_m(t)\|^2_{\bdL^2(\Omega)} \leq e^{(2\beta_0+1)t}(\|\bdB_0\|^2_{\bdL^2(\Omega)}+\|\bdF\|^2_{L^2(0,T;\bdL^2(\Omega)}).\label{dbBm1}
	\end{eqnarray}
	With the definition 
	\begin{eqnarray*}
		\|\bdB_m\|^2_{L^2(0,T; H(\curl;\Omega))}=\int^T_0\|\bdB_m\|^2_{H(\curl;\Omega)}dt, 
	\end{eqnarray*}
	\eqref{est1} can be derived by using \eqref{dbBm0} and \eqref{dbBm1}.
	
	For any $\bdv \in \bV, \|\bdv\|_{H(\curl;\Omega)}\leq 1$, we have the following decomposition $\bdv=\bdv_1+\bdv_2$, $\bdv_1\in Span\{\bdw_1,\bdw_2,...,\bdw_m\}$ and  $(\bdv_1,\bdv_2)=0$. By the choice of $\{\bdw_k\}\left(k=1,2,\cdots\right)$, we know that $$\|\bdv_1\|_{H(\curl;\Omega)}\leq \|\bdv\|_{H(\curl;\Omega)}\leq 1.$$ 
	Furthermore 
	\begin{eqnarray*}
		(\bdB^\prime_m,\bdv_1)+(\beta(\bdx)\nabla\times\bdB_m,\nabla\times\bdv_1)=(\bdF,\bdv_1).
	\end{eqnarray*}
	Now 
	\begin{eqnarray*}
		\left<\bdB_m^\prime,\bdv\right>=(\bdB_m^\prime,\bdv)=(\bdB_m^\prime,\bdv_1)=(\bdF,\bdv_1)-(\beta(\bdx)\nabla\times\bdB_m,\nabla\times\bdv_1),
	\end{eqnarray*}
	So we have 
	\begin{eqnarray*}
		|\left<\bdB_m^\prime,\bdv\right>|\leq C(\|\bdF\|^2_{\bdL^2(\Omega)}+\|\bdB_m\|^2_{H(\curl;\Omega)}).
	\end{eqnarray*}
	%Then 
	%\begin{eqnarray}
	%\|\bdB_m\|_{\bU}\leq C(\|\bdF\|^2_{\bdL^2(\Omega)}+\|\bdB_m\|^2_{H(\curl;\Omega)}).
	%\end{eqnarray}
	Using the fact that $\|\bdv\|_{H(\curl;\Omega)}\leq 1$,  \eqref{est2} can be proved with the help of \eqref{est1}.
\end{proof}

\begin{theorem}\label{wplinear}
	The linear problem \eqref{linsys} has a unique solution. Furthermore the solution is divergence free $\nabla\cdot \bdB(\bdx,t)=0$ \, a. e. \, $0\leq t\leq T$ provided that $\nabla\cdot\bdB_0(\bdx)=0$.
\end{theorem}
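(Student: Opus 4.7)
The plan is to establish existence via Galerkin limits of the approximations $\bdB_m$ from \eqref{galerkin1}--\eqref{galerkin2}, uniqueness by an energy estimate on the difference of two candidate solutions, and the divergence-free property by testing against gradients of $H^1_0$-functions.

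For existence I would start from the uniform bounds in Lemma \ref{estBm}: since $L^2(0,T;\bV)$ and $L^2(0,T;\bU)$ are reflexive, I can extract a subsequence with $\bdB_m\rightharpoonup\bdB$ in $L^2(0,T;\bV)$ and $\bdB_m'\rightharpoonup\bdB'$ in $L^2(0,T;\bU)$. To identify the limit as a weak solution I would fix $N$, test \eqref{galerkin1} against $\bdA(\bdx,t)=\sum_{k\le N} c_k(t)\bdw_k(\bdx)$ with $c_k\in C^1([0,T])$, integrate in time, and pass to the weak limit term by term (every term is linear in $\bdB_m$). Density of such test functions in $L^2(0,T;\bV)$, guaranteed by the basis construction described in the Remark, upgrades the identity to all $\bdA\in L^2(0,T;\bV)$. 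The initial condition $\bdB(0)=\bdB_0$ then comes from a standard integration-by-parts in time, using the Lions--Magenes continuous embedding $\{\bdv\in L^2(0,T;\bV):\bdv'\in L^2(0,T;\bU)\}\hookrightarrow C([0,T];\bdL^2(\Omega))$ and the fact that $\bdB_m(0)\to\bdB_0$ in $\bdL^2(\Omega)$ by the orthonormality of $\{\bdw_k\}$.

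For uniqueness I would take two solutions $\bdB_1,\bdB_2$ with the same data, set $\boldsymbol{W}=\bdB_1-\bdB_2$, and observe that $\boldsymbol{W}$ satisfies the weak formulation with $\bdF=0$ and $\boldsymbol{W}(0)=0$; moreover $\boldsymbol{W}$ is admissible as a test function via the embedding above, producing
\[
\tfrac12\tfrac{d}{dt}\|\boldsymbol{W}\|^2_{\bdL^2(\Omega)}+\beta_0\|\nabla\times\boldsymbol{W}\|^2_{\bdL^2(\Omega)}\le 0,
\]
so that Lemma \ref{gronwall} forces $\boldsymbol{W}\equiv 0$.

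For the divergence-free conclusion, for any $\phi\in H^1_0(\Omega)$ the field $\nabla\phi$ lies in $\bV$ (as observed in the Remark, $\nabla\phi$ belongs to the kernel of $\curl$ inside $H_0(\curl;\Omega)$). Taking $\bdA=\nabla\phi$ in the weak equation annihilates the diffusion term because $\nabla\times\nabla\phi=0$, leaving $\langle\bdB',\nabla\phi\rangle=(\bdF,\nabla\phi)$. In the application of this theorem to \eqref{Z1} the forcing has a curl structure and is therefore weakly divergence-free, so $(\bdF,\nabla\phi)=0$; integrating in time and using $(\bdB_0,\nabla\phi)=0$ (which is the weak version of $\nabla\cdot\bdB_0=0$) yields $(\bdB(t),\nabla\phi)=0$ for a.e. $t\in[0,T]$, i.e. $\nabla\cdot\bdB(\cdot,t)=0$ in the distributional sense. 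The main obstacle I anticipate is conceptual rather than computational: the divergence-free statement as stated tacitly requires $\bdF$ to be weakly divergence-free, an assumption not made explicit in \eqref{linsys} but automatically available in the nonlinear application to \eqref{Z1}. Everything else is a routine combination of weak compactness, the Lions--Magenes embedding, and Gronwall, once Lemma \ref{estBm} is in hand.
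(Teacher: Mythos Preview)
Your proposal is correct and follows essentially the same route as the paper: weak compactness from Lemma \ref{estBm} for existence, passing to the limit against finite-span test functions and then invoking density, an energy identity on the difference for uniqueness, and testing against gradients of $H^1_0$ functions for the divergence-free conclusion. Your remark that the divergence-free step tacitly requires $(\bdF,\nabla\phi)=0$ is exactly right---the paper invokes $\nabla\cdot\bdF=0$ at that point without having listed it among the hypotheses of \eqref{linsys}, relying on the curl structure of the forcing in the nonlinear application.
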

\begin{proof}
	From the Lemma \ref{estBm}, we know that $\{\bdB_m\}^\infty_{m=1}$ is uniformly bounded in $L^2(0,T;\bV)$ and $\{\bdB_m^\prime\}^\infty_{m=1}$ is uniformly bounded in $L^2(0,T;\bU)$. Then there exists a subsequence, still denoted by $\{\bdB_m\}^\infty_{m=1}$  and $\bdB\in L^2(0,T;\bV)$ with $\bdB^\prime\in L^2(0,T;\bU)$ such that
	\begin{eqnarray*}
		\bdB_m\rightharpoonup \bdB \, \mbox{ weakly in } \, \bV,\\
		\bdB^\prime_m\rightharpoonup \bdB^\prime \, \mbox{ weakly in } \, \bU.
	\end{eqnarray*}
	For given positive integer $N$, we chose $\bdv(\bdx,t)=\sum^N_{k=1}v_k(t)\bdw_k(\bdx)$ with smooth $v_k(t)$ for $k=1,2,\cdots,N$ such that $\bdv\in C^1(0,T;\bV)$, then 
	\begin{eqnarray}
		\int^T_0(\bdB_m^\prime,\bdv)+(\beta(\bdx)\nabla\times\bdB_m,\nabla\times\bdv)dt=\int^T_0(\bdF,\bdv)dt.\label{psl0}
	\end{eqnarray}
	By the weak convergence of $\bdB_m$ in $L^2(0,T;\bV)$ and $\bdB_m^\prime$ in $L^2(0,T;\bU)$, we have 
	\begin{eqnarray}
		\int^T_0\left<\bdB^\prime,\bdv\right>+(\beta(\bdx)\nabla\times\bdB,\nabla\times\bdv)dt=\int^T_0(\bdF,\bdv)dt.\label{passinglimit}
	\end{eqnarray}
	Since the functions with the form of $\bdv(\bdx,t)=\sum^N_{k=1}v_k(t)\bdw_k(\bdx)$ are dense in $L^2(0,T;\bV)$, \eqref{passinglimit} is valid for any $\bdv\in L^2(0,T;\bV)$.
	Hence 
	\begin{eqnarray}
		\left<\bdB^\prime,\bdv\right>+(\beta(\bdx)\nabla\times\bdB,\nabla\times\bdv)=(\bdF,\bdv)\label{weak}
	\end{eqnarray}
	for any $\bdv(\bdx)\in H_0(\curl;\Omega)$ and a.e. $0\leq t\leq T$. 
	
	From \eqref{passinglimit}, with integral by part, we have 
	\begin{eqnarray*}
		\int^T_0-\left<\bdB,\bdv^\prime\right>+(\beta(\bdx)\nabla\times\bdB,\nabla\times\bdv)dt=\int^T_0(\bdF,\bdv)dt+(\bdB(\bdx,0),\bdv(\bdx,0))
	\end{eqnarray*}
	for any $\bdv(\bdx,t)\in C^1(0,T;\bV)$ with $\bdv(\bdx,T)=0$. From \eqref{psl0}, we have 
	\begin{eqnarray*}
		\int^T_0-\left<\bdB_m,\bdv^\prime\right>+(\beta(\bdx)\nabla\times\bdB_m,\nabla\times\bdv)dt=\int^T_0(\bdF,\bdv)dt+(\bdB_m(\bdx, 0),\bdv(\bdx, 0)).
	\end{eqnarray*}
	Then by the weak convergence of $\bdB_m$ and $\bdB_m(\bdx,0)\rightarrow\bdB_0(\bdx)$ and letting $m\rightarrow \infty$, we have 
	\begin{eqnarray}
		\int^T_0-\left<\bdB,\bdv^\prime\right>+(\beta(\bdx)\nabla\times\bdB,\nabla\times\bdv)dt=\int^T_0(\bdF,\bdv)dt+(\bdB_0(\bdx),\bdv(0)).\label{bdry}
	\end{eqnarray}
	Since $\bdv(\bdx,0)$ can be chosen arbitrarily, we have that $\bdB(\bdx,0)=\bdB_0(\bdx)$.  This confirms that $\bdB(\bdx,t)$ is the weak solution to \eqref{linsys}. 
	
	Let $\bdB_0(\bdx)=0, \bdF(\bdx,t)=0$ and chose $\bdv=\bdB$ in \eqref{weak}, we have 
	\begin{eqnarray*}
		\frac{1}{2}\frac{d}{dt}\|\bdB\|^2_{\bdL^2(\Omega)}+(\beta(\bdx)\nabla\times\bdB,\nabla\times\bdB)=0
	\end{eqnarray*}
	then $\|\bdB(t)\|^2_{\bdL^2(\Omega)}$ is a constant and 
	\begin{eqnarray*}
		\|\bdB(\bdx, t)\|^2_{\bdL^2(\Omega)}=\|\bdB_0(x)\|^2_{\bdL^2(\Omega)}=0, \quad \forall \, t\in [0,T].
	\end{eqnarray*}
	This confirms that the solution is unique.
	
	It is known that $H^1_0(\Omega)=\{\phi\in L^2(\Omega), \nabla\phi\in\bdL^2(\Omega), \phi=0 \mbox{ on } \partial\Omega\}$ is a separable Hilbert space, we can chose $w_k(\bdx),k=1,2,\cdots$ such that 
	\begin{eqnarray*}
		\{w_k(\bdx)\}^\infty_{k=1} \mbox{ is an orthonormal basis of }  H^1_0(\Omega).
	\end{eqnarray*}
	We chose smooth $\psi_k(t)$ such that $\psi_k(T)=0,k=1,2,\cdots,N$,  let $\bdv(\bdx,t)=\sum^N_{k=1}\psi_k(t) \nabla w_k(\bdx)$ in \eqref{psl0}. 
	Because $w_k(\bdx)\in H^1_0(\Omega)$,  $\bdv(\bdx,t)\in H_0(\curl;\Omega)$ for any $t\in [0,T]$. We have from \eqref{bdry} that
	\begin{eqnarray}
		\int^T_0(\bdB,\bdv^\prime)=0 \label{divB}
	\end{eqnarray}
	because of $\nabla\cdot\bdF=0, \nabla\cdot\bdB_0(\bdx)=0$. Since $\{w_k(\bdx)\}_{k=1}^\infty$ is a basis of $H^1_0(\Omega)$,  we have from \eqref{divB} that
	$$(\bdB(\bdx,t), \nabla\phi)=0 \, \mbox{ a.e. } \, 0\leq t\leq T$$
	for any $\phi(x)\in H^1_0(\Omega)$.
	This confirms that $\nabla\cdot\bdB(\bdx,t)$ is divergence free a.e. $0\leq t\leq T$.
\end{proof}

\begin{remark}
	We remark that the results in this subsection is still valid if we assume $\bdF(\bdx,t)\in L^2(0,T; \bU)$. 
\end{remark}

%%%%%%%%%%%%%%%%%%%%%%%%%%%%%%%%%%%%%%%%%%%%%%%%%%%%%%%%%%%%%%%%%%%%%%%%%%%%%%%%%
\subsection{The well-posedness of nonlinear system}
In the section, we will prove the well-posedness of \eqref{BF1} with the help of Banach fixed point Theorem \ref{fpt}. Let the operator $\bG: L^2(0,T;\bdL^2(\Omega))\rightarrow L^2(0,T; \bdL^2(\Omega))$ 
\begin{eqnarray*}
	\bG(\bdB)=R_\alpha(\frac{f}{1+\sigma|\bdB|^2}\bdB)+R_m(\bdu\times\bdB).
\end{eqnarray*}
Where $f$ is supported in $\Omega_3$ and $\bdu$ is supported in $\Omega_2$. We further assume that $f\in H^1(0,T;L^\infty(\Omega))$ and $\bdu\in H^1(0,T; \bdL^\infty(\Omega))$.
We have the following result on the bounded property of operator $\bG$.
\begin{lemma}\label{estG}
	For any $\bdB_1, \bdB_2 \in L^2(0,T; \bdL^2(\Omega))$, there is a constant $C$ which depends on $R_\alpha, R_m, \sigma$ and $f \in L^\infty(\Omega)$ supported in $\Omega_3$, $\bdu \in \boldsymbol{L}^\infty(\Omega)$ supported in $\Omega_2$, such that 
	\begin{eqnarray*}
		\|\bG(\bdB_1)-\bG(\bdB_2)\|_{\bdL^2(\Omega)}\leq C\|\bdB_1-\bdB_2\|_{\bdL^2(\Omega)}\label{boundF}.
	\end{eqnarray*}
\end{lemma}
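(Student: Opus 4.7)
The plan is to prove the pointwise (in $\bdx$) Lipschitz estimate by splitting $\bG$ into its two physically distinct pieces, handling the linear Lorentz-type term directly and establishing a pointwise Lipschitz bound on the scalar nonlinearity $\bdB\mapsto \bdB/(1+\sigma|\bdB|^2)$ that appears in the $\alpha$-quenching term.

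First, I would write
\begin{equation*}
\bG(\bdB_1)-\bG(\bdB_2)=R_\alpha\!\left(\frac{f\,\bdB_1}{1+\sigma|\bdB_1|^2}-\frac{f\,\bdB_2}{1+\sigma|\bdB_2|^2}\right)+R_m\,\bdu\times(\bdB_1-\bdB_2).
\end{equation*}
The second term is immediate: since $|\bdu\times(\bdB_1-\bdB_2)|\le|\bdu|\,|\bdB_1-\bdB_2|$ pointwise and $\bdu\in\bdL^\infty(\Omega)$ is supported in $\Omega_2$, it contributes at most $R_m\|\bdu\|_{\bdL^\infty(\Omega)}\|\bdB_1-\bdB_2\|_{\bdL^2(\Omega)}$ after integration in space.

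The main work is the pointwise estimate for $g(\bdB):=\bdB/(1+\sigma|\bdB|^2)$ on $\mathbb{R}^3$. I would put the difference over a common denominator,
\begin{equation*}
g(\bdB_1)-g(\bdB_2)=\frac{(\bdB_1-\bdB_2)+\sigma\bigl(\bdB_1|\bdB_2|^2-\bdB_2|\bdB_1|^2\bigr)}{(1+\sigma|\bdB_1|^2)(1+\sigma|\bdB_2|^2)},
\end{equation*}
then rewrite the cross-term as $\bdB_1(|\bdB_2|^2-|\bdB_1|^2)+(\bdB_1-\bdB_2)|\bdB_1|^2$ and use the elementary inequality $\bigl||\bdB_2|^2-|\bdB_1|^2\bigr|\le(|\bdB_1|+|\bdB_2|)|\bdB_1-\bdB_2|$. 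Absorbing factors of the form $\sigma|\bdB_i|^2/(1+\sigma|\bdB_i|^2)\le 1$ and $\sigma|\bdB_1||\bdB_2|\le\tfrac12(\sigma|\bdB_1|^2+\sigma|\bdB_2|^2)$ into the denominator then yields a universal pointwise bound $|g(\bdB_1)-g(\bdB_2)|\le C_\sigma|\bdB_1-\bdB_2|$. (Equivalently, one can bound the Jacobian $Dg$ directly; the denominator bounds give $\|Dg\|\le 3/2$ uniformly.) Multiplying by $f$ and using $f\in L^\infty(\Omega)$ gives the pointwise estimate for the $\alpha$-quenching term.

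Combining the two bounds and taking $L^2$ norms in $\bdx$ produces
\begin{equation*}
\|\bG(\bdB_1)-\bG(\bdB_2)\|_{\bdL^2(\Omega)}\le\bigl(C_\sigma R_\alpha\|f\|_{L^\infty(\Omega)}+R_m\|\bdu\|_{\bdL^\infty(\Omega)}\bigr)\|\bdB_1-\bdB_2\|_{\bdL^2(\Omega)},
\end{equation*}
which is the claimed estimate. The only delicate step is the pointwise Lipschitz bound on $g$: the algebra looks mildly unpleasant because of the mixed cubic terms in the numerator, but it is saved by the observation that every such cubic term is dominated, after division by the quartic denominator, by a bounded combination of $\sigma|\bdB_i|^2/(1+\sigma|\bdB_i|^2)$. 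No further functional-analytic machinery is needed.
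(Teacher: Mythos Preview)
Your proof is correct and follows essentially the same route as the paper: both split $\bG$ into its two pieces, handle the cross-product term via $\bdu\in\bdL^\infty$, and for the $\alpha$-quenching term put the difference over a common denominator and decompose the numerator so that each cubic factor is controlled by the quartic denominator. The paper's write-up is terser---it records the numerator as $(\bdB_1-\bdB_2)+\sigma|\bdB_2|^2(\bdB_1-\bdB_2)+\sigma(|\bdB_2|^2-|\bdB_1|^2)\bdB_2$ and immediately asserts the bound---but the underlying algebra and absorption argument are identical to yours.
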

\begin{proof}
	By the definition, we have 
	\begin{eqnarray*}
		&&\|\bG(\bdB_1)-\bG(\bdB_2)\|^2_{\bdL^2(\Omega)}=R_\alpha\int_{\Omega_3}f^2|\frac{\bdB_1}{1+\sigma|\bdB_1|^2}-\frac{\bdB_2}{1+\sigma|\bdB_2|^2}|^2dx\\
		&&\qquad\quad+R_m\int_{\Omega_2}|\bdu\times(\bdB_1-\bdB_2)|^2dx\\
		&&\qquad=R_\alpha\int_{\Omega_3}f^2|\frac{(\bdB_1-\bdB_2)+\sigma|\bdB_2|^2(\bdB_1-\bdB_2)+\sigma(|\bdB_2|^2-|\bdB_1|^2)\bdB_2}{(1+\sigma|\bdB_1|^2)(1+\sigma|\bdB_2|^2)}|^2dx\\
		&&\qquad\quad+R_m\int_{\Omega_2}|\bdu\times(\bdB_1-\bdB_2)|^2dx\\
		&&\qquad\leq C\|\bdB_1-\bdB_2\|^2_{\bdL^2(\Omega)}.
	\end{eqnarray*}
\end{proof}
Now we are in position to prove the existence and uniqueness of solution to the nonlinear system \eqref{BF1}.  
\begin{theorem}
	\label{t34}
	There exists a unique solution of problem~\eqref{BF1}.
\end{theorem}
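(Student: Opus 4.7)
The plan is to apply Banach's fixed point theorem (Theorem \ref{fpt}) to the map $S$ obtained by composing the nonlinearity $\bG$ of Lemma \ref{estG} with the linear solution operator of Theorem \ref{wplinear}. Given $\tilde\bdB\in X:=L^2(0,T;\bdL^2(\Omega))$, I would define the source functional $\bdF_{\tilde\bdB}\in L^2(0,T;\bU)$ via $\langle\bdF_{\tilde\bdB},\bdv\rangle=(\bG(\tilde\bdB),\nabla\times\bdv)$ for $\bdv\in\bV$; its $\bU$-norm is controlled by $\|\bG(\tilde\bdB)\|_{\bdL^2(\Omega)}$, and hence by $\|\tilde\bdB\|_{\bdL^2(\Omega)}$ through Lemma \ref{estG}. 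Theorem \ref{wplinear}, together with the remark extending it to sources in $L^2(0,T;\bU)$, then produces a unique $\bdB\in L^2(0,T;\bV)$ with $\bdB'\in L^2(0,T;\bU)$, $\bdB(\cdot,0)=\bdB_0$, and $\nabla\cdot\bdB=0$. Setting $S(\tilde\bdB):=\bdB$ yields a well-defined self-map $S:X\to X$, and any fixed point of $S$ is precisely a weak solution of \eqref{BF1}.

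Next I would establish that $S$ is a contraction with respect to a suitable norm on $X$. For $\tilde\bdB_1,\tilde\bdB_2\in X$, write $\bdB_i:=S(\tilde\bdB_i)$; by linearity the difference $\bdB_1-\bdB_2$ satisfies the linear system with vanishing initial data and source $\bG(\tilde\bdB_1)-\bG(\tilde\bdB_2)$ in curl-form. Testing the weak equation against $\bdB_1-\bdB_2$, and invoking Young's inequality with $\varepsilon$, the lower bound $\beta\ge\beta_0$, and Lemma \ref{estG}, I obtain
\begin{equation*}
\frac{d}{dt}\|\bdB_1-\bdB_2\|^2_{\bdL^2(\Omega)}+\beta_0\|\nabla\times(\bdB_1-\bdB_2)\|^2_{\bdL^2(\Omega)}\le \frac{C^2}{\beta_0}\|\tilde\bdB_1-\tilde\bdB_2\|^2_{\bdL^2(\Omega)},
\end{equation*}
whence $\|\bdB_1(t)-\bdB_2(t)\|^2_{\bdL^2(\Omega)}\le\frac{C^2}{\beta_0}\int_0^t\|\tilde\bdB_1-\tilde\bdB_2\|^2_{\bdL^2(\Omega)}\,ds$ for every $t\in[0,T]$.

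The principal obstacle is that integrating this pointwise estimate once more over $[0,T]$ yields a Lipschitz constant proportional to $T$, which need not be less than one. I would overcome this by endowing $X$ with the equivalent weighted norm $\|\bdv\|_\lambda^2:=\int_0^T e^{-\lambda t}\|\bdv(t)\|^2_{\bdL^2(\Omega)}\,dt$ and, after interchanging the order of integration via Fubini, showing $\|S(\tilde\bdB_1)-S(\tilde\bdB_2)\|_\lambda^2\le\frac{C^2}{\beta_0\lambda}\|\tilde\bdB_1-\tilde\bdB_2\|_\lambda^2$; choosing $\lambda>C^2/\beta_0$ makes $S$ a strict contraction on the Banach space $(X,\|\cdot\|_\lambda)$, and Theorem \ref{fpt} supplies a unique fixed point $\bdB$. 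Equivalently, one could first apply Banach's theorem on a short interval $[0,T_*]$ with $T_*<\beta_0/C^2$ and then iterate, since the constants in Theorem \ref{wplinear} and Lemma \ref{estG} do not degenerate with the initial state. Uniqueness over $[0,T]$ in $L^2(0,T;\bV)$ either follows from Banach's theorem directly or from the same energy inequality applied to the difference of two candidate solutions, and the divergence-free property of $\bdB$ is inherited from $S$ without further work.
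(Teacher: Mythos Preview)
Your proposal is correct and follows essentially the same route as the paper: define $S$ by composing $\bG$ with the linear solution operator from Theorem~\ref{wplinear}, derive the same pointwise energy bound for $S(\tilde\bdB_1)-S(\tilde\bdB_2)$ via Young's inequality and Lemma~\ref{estG}, and then invoke Banach's fixed point theorem. The only cosmetic difference is that the paper obtains the contraction by restricting to a short interval $[0,T_1]$ with $CT_1<1$ and iterating (exactly the alternative you mention), whereas you lead with the equivalent exponentially weighted norm $\|\cdot\|_\lambda$; uniqueness in the paper is likewise redone by the same energy/Gronwall argument you sketch.
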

\begin{proof}
	For any $\bdB\in L^2(0,T;\bV)$, we define $\bdF: L^2(0,T; \bdL^2(\Omega))\rightarrow L^2(0,T; \bU)$ by $\bdF(\bdB)=\nabla\times\bG(\bdB)$. We consider the following problem: find $\bdA\in L^2(0,T;\bV)$ with $\bdA^\prime\in L^2(0,T;\bU)$ such that
	\begin{eqnarray}
		(\bdA^\prime, \bdv)+(\beta(\bdx)\nabla\times\bdA,\nabla\times \bdv)=\left<\bdF(\bdB),\bdv\right> \, \mbox{ a.e. } \, 0\leq t\leq T
		\label{t33}
	\end{eqnarray}
	for any $\bdv\in H_0(\curl;\Omega)$.  Theorem \ref{wplinear} tells us that the above problem has a unique solution $\bdA$ in $L^2(0,T;\bV)$. Then we can define $S: L^2(0,T;\bdL^2(\Omega))\rightarrow L^2(0,T;\bdL^2(\Omega))$ by letting $S(\bdB)=\bdA$, which satisfies \eqref{t33} and $\bdn\times\bdA(\bdx,t)|_{\partial\Omega}=0$.
	
	In the following, we are going to prove that $S$ is a contracting operator if $T$ is small enough. Given $\bdB, \tilde{\bdB} \in L^2(0,T;\bdL^2(\Omega))$, let $\bdA=S(\bdB), \tilde{\bdA}=S( \tilde{\bdB})$, it is easy to derive that 
	\begin{eqnarray*}
		&&\frac{d}{dt}\|\bdA-\tilde{\bdA}\|^2_{\bdL^2(\Omega)}+2(\beta(\bdx)\nabla\times(\bdA-\tilde{\bdA}), \nabla\times(\bdA-\tilde{\bdA}))\\
		&&\qquad =2\left<\bdA-\tilde{\bdA},\bdF(\bdB)-\bdF(\tilde{\bdB})\right>\\
		&&\qquad = 2(\nabla\times(\bdA-\tilde\bdA), \bG(\bdB)-\bG(\tilde\bdB))\\
		&&\qquad\leq 2\varepsilon \|\nabla\times(\bdA-\tilde{\bdA})\|^2_{\bdL^2(\Omega)}+\frac{2}{\varepsilon}\|\bG(\bdB)-\bG(\tilde{\bdB})\|^2_{\bdL^2(\Omega)}.
		%&&\qquad\leq C\varepsilon\| \nabla\times(\bdA-\tilde{\bdA})\|^2_{\bdL^2(\Omega)}+\frac{1}{\varepsilon}\|\bdF(\bdB)-\bdF(\tilde{\bdB})\|^2_{\bdL^2(\Omega)}
	\end{eqnarray*}
	%We use  Corollary 3.51 in \cite{Monk2003} and the fact $\nabla\cdot(\bdA-\tilde\bdA)=0, \bdA-\tilde\bdA\in H_0(\curl;\Omega)$ for a.e. $0\leq t\leq T$  in the last inequality.
	Then for sufficiently small $\varepsilon$ (we can chose $\varepsilon\leq \beta_0$ here), with the help of Lemma \ref{estG},  we have
	\begin{eqnarray*}
		\frac{d}{dt}\|\bdA-\tilde{\bdA}\|^2_{\bdL^2(\Omega)}\leq C\|\bG(\bdB)-\bG(\tilde\bdB)\|^2_{\bdL^2(\Omega)}\leq C\|\bdB-\tilde\bdB\|^2_{\bdL^2(\Omega)} .%\mbox{ a.e. } 0\leq t\leq T.
	\end{eqnarray*}
	So 
	\begin{eqnarray*}
		\|\bdA(t)-\tilde{\bdA}(t)\|^2_{\bdL^2(\Omega)}\leq C\int^t_0\|\bdB(\tau)-\tilde\bdB(\tau)\|^2_{\bdL^2(\Omega)}d\tau\leq C\|\bdB-\tilde\bdB\|^2_{L^2(0,T;\bdL^2(\Omega))}
	\end{eqnarray*}
	and 
	\begin{eqnarray*}
		\|\bdA-\tilde{\bdA}\|^2_{L^2(0,T;\bdL^2(\Omega))}\leq  CT\|\bdB-\tilde\bdB\|^2_{L^2(0,T;\bdL^2(\Omega))}.
	\end{eqnarray*}
	It is just
	\begin{eqnarray*}
		\|S(\bdB)-S(\tilde{\bdB})\|_{L^2(0,T;\bdL^2(\Omega))}\leq  (CT)^{1/2}\|\bdB-\tilde\bdB\|_{L^2(0,T;\bdL^2(\Omega))}.
	\end{eqnarray*}
	We can conclude that $S$ is a contracting operator for sufficiently small $T$. For any given $T>0$, there exists  $T_1\leq T$ such that $CT_1<1$. Then with the help of Banach fixed point theorem, there is a solution of problem \eqref{BF1} in $[0,T_1]$. By the definition of $S(\bdB)$, we know that the solution $\bdB$ is actually in  $L^2(0,T_1; H_0(\curl;\Omega))$ and 
	$\nabla\cdot\bdB(\bdx,t)=0$ a.e. $t\in [0, T_1]$.
	Then we can recursively use the above procedure to prove that \eqref{BF1} has a solution in $[T_1, 2T_1], [2T_1, 3T_1]$ and so on. Further, in finite steps, we prove that
	there exists a solution of \eqref{BF1} in $[0,T]$.
	
	%Furthermore, for the derivation of the uniqueness of solutions, let $\bdB, \tilde{\bdB} \in L^2(0,T;\bV)$ be the two solutions of problem \eqref{BF1}.
	Finally, we shall prove the uniqueness of this solution.
	Take two solutions $\bdB_1$ and $\bdB_2 \in  L^{2}(0, T; \bdv)$ that satisfy the system \eqref{BF1}.
	And set $\bar{\bdB}=\bdB_1-\bdB_2$, which satisfies  $\bar{\bdB}(0)=0$. 
	Then, substituting $\bdB_1$ and $\bdB_2$ into system \eqref{BF1} and subtracting them yields:
	\begin{equation*}
		\begin{aligned}
			\left<\bar{\bdB}^{\prime} , \bdA\right>
			+  \left(\beta \nabla \times \bar{\bdB} , \nabla \times \bdA\right)
			= R_\alpha \left(\frac{f\bdB_1}{1+\sigma\left|\bdB_1\right|^2}
			-\frac{f\bdB_2}{1+\sigma\left|\bdB_2\right|^2} , \nabla \times \bdA\right)
			+ R_m
			\left(\bdu \times \bar{\bdB}, \nabla \times \bdA\right).
		\end{aligned}
	\end{equation*}
	Taking $\bdA=\bar{\bdB}$ and with the help of Lemma \ref{estG}, we have
	\begin{equation*}
		\begin{aligned}
			&\frac{1}{2} \frac{d}{d t}\left\|\bar{\bdB}\right\|^2+\left\|\nabla \times \bar{\bdB}\right\|_\beta^2 
			\leq  C \left\|\frac{ f \bdB_1}{1+\sigma\left|\bdB_1\right|^2}-\frac{ f \bdB_2}{1+\sigma\left|\bdB_2\right|^2}\right\| \left\|\nabla \times \bar{\bdB}\right\|_\beta 
			+ C \left\|\bdu \times \bar{\bdB}\right\| \left\|\nabla \times \bar{\bdB}\right\|_\beta \\
			&\quad\quad \leq  \left\|\nabla \times \bar{\bdB}\right\|_\beta^2 
			+ C \left\|\bar\bdB\right\|_{L^2(\Omega)}^2
			+ C \left\|\bdu \times \bar{\bdB}\right\|_{L^2(\Omega)}^2.
		\end{aligned}
	\end{equation*}
	Where $\left\|\nabla \times \bar{\bdB}\right\|_\beta=(\beta\nabla\times \bdB,\nabla\times\bdB)$ and the inequality holds since $0\leq \beta_0\leq\beta\leq \beta_M $. Because $\bdu\in L^\infty(\Omega)$, we can conclude that 
	\begin{equation*}
		\frac{d}{d t}\left\|\bar{\bdB}\right\|^2  \leq  C \left\|\bar\bdB\right\|_{L^2(\Omega)}^2.
	\end{equation*}
	
	Which implies $\bar{\bdB}=0$ by applying the Gronwall's inequality in Lemma \ref{gronwall}.  
	Then the solution is unique and we finish the proof of Theorem~\ref{t34}. 
\end{proof}

%%%%%%%%%%%%%%%%%%%%%%%%%%%%%%%%%%%%%%%%%%%%%%%%%%%%%%%%%%%%%%%%%%%%%%%%%%%%%%%%%%

%%%%%%%%%%%%%%%%%%%%%%%%%%%%%%%%%%%%%%%%%%%%%%%%%%%%%%%%%%%%%%%%%%%%%%%%%%%%%%%%%%

\section{The numerical scheme}\label{sect4}
In the first two sections, the new spherical interface dynamo model~\eqref{Z1} and the relevant properties have been introduced in detail, and then we shall seek the numerical solution of the model.
In this section, we mainly introduce the finite element approximation of the variational formulation \eqref{BF1}.
For the temporal discretization, the backward Euler scheme is used in the left hand side and the right hand side is explicit in time. For spatial discretization, we use the edge element method.
And the corresponding fully discretized numerical scheme is provided in the subsections below. Meanwhile, we investigate the properties of this scheme such as the stability and the well-posedness.

%\subsection{The full discrete scheme and stability analysis}

%%%%%%%%%%%%%%%%%%%%%%%%%%%%%%%%%%%%%%%%%%%%%%%%%%%%%%%%%%%%%%%%%%%%%%%%%%%%%%%%%%
\subsection{The full discrete scheme}
In this subsection, we introduce the edge element method for solving the system \eqref{BF1} and provide the corresponding fully discretized scheme.

We first make the uniform partition of time interval $\left[0,T\right]$ as follows
$$
0=t_0<t_1<t_2<\cdots<t_K=T,
$$
where $t_n = n \tau \; (n=0,1,\cdots,K)$ and time step $\tau = \frac{T}{K}$.
For any time discrete sequence $\left\{u^n\right\}^K_{n=0}$, we define $u^n(\cdot) = u(\cdot, t_n)$ and let
\begin{equation*}
	\partial_\tau u^n = \frac{u^n - u^{n-1}}{\tau}, \quad n \in \left[1,K\right].
\end{equation*}

For spatial discretization, we first define the triangulation of the spherical domain $\Omega$. We assume that its outer boundary $\partial \Omega$ is a closed convex polygon which approximates the boundary of the real spherical surface.  
Suppose $\mathcal{M}_h$ is a regular tetrahedral triangulation of domain $\Omega$ and is divided into four single triangulations $\mathcal{M}_h^i(i=1,2,3,4)$ whose boundary vertices match at the interfaces of nearby regions. Then the mesh $\mathcal{M}_h$ is an approximated partition of real spherical domain.
On this basis, we use the edge finite element to approximate the system \eqref{BF1}.
The Nédélec edge element space $\boldsymbol{S}_{h}$ defined by \cite{Nedelec1980,Nedelec1986} over $\mathcal{M}_h$  is
$$\boldsymbol{S}_{h} \triangleq 
\left\{\bdA \in \boldsymbol{V}:\;\bdA|_{\mathcal{T}}=\boldsymbol{a}_{\mathcal{T}}+\boldsymbol{b}_{\mathcal{T}} \times \bdx \;\; \text{with} \;\; \boldsymbol{a}_{\mathcal{T}},\boldsymbol{b}_{\mathcal{T}} \in \mathbb{R}^3 \;\; \forall \; \mathcal{T} \in \mathcal{M}_h\right\}.$$
Further, we propose the following fully discrete finite element approximation of the variational problem \eqref{BF1}. 
For any $\bdA_h \in \boldsymbol{S}_{h}$, find $\bdB^n_h \in \boldsymbol{S}_{h} \; \left(n = 1,2,3,\cdots,K\right)$, such that
\begin{equation}
	\begin{aligned}
		\left(\partial_\tau \bdB^n_h, \bdA_h\right)+&\left(\beta_h \nabla \times \bdB^n_h, \nabla \times \bdA_h\right) \\
		=& R_{\alpha}\left(\frac{f^n_h}{1+\sigma|\bdB^{n-1}_h|^{2}} \bdB^{n-1}_h, \nabla \times \bdA_h\right)+R_{m}\left(\bdu^n_h \times \bdB^{n-1}_h, \nabla \times \bdA_h\right),
	\end{aligned}
	\label{EFE1}
\end{equation}
with initial condition $\bdB^0_h = \bdB_0$. Since $\nabla \cdot \bdB_{0} = 0$,  $\left(\bdB_{0},\nabla \phi_h\right)=0$ for any $\phi_h \in V_h \cap H_0^1$, where $V_h$ is the linear Lagrange finite element space over the mesh $\mathcal{M}_h$. Further, this initial condition makes the above system satisfy that $\left(\bdB^{n}_{h},\nabla \phi_h\right)=0$ for any $n$.
%%%%%%%%%%%%%%%%%%%%%%%%%%%%%%%%%%%%%%%%%%%%%%%%%%%%%%%%%%%%%%%%%%%%%%%%%%%%%%%%%%
\subsection{Well-posedness and stability analysis}
The following lemma gives the well-posedness and the stability of the fully discrete numerical scheme \eqref{EFE1}.
\begin{theorem}
	There exists a unique solution $\bdB^n_h$ to the approximate variational problem \eqref{EFE1} for each fixed $t_n \; \left(n = 1,2,3,\cdots,K\right)$.
\end{theorem}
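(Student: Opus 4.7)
The plan is to exploit the key structural feature of the scheme \eqref{EFE1}: it is \emph{semi-implicit}, meaning that the only occurrences of the unknown $\bdB^n_h$ are in the two terms on the left hand side, while everything involving the nonlinearity $1/(1+\sigma|\bdB|^2)$, the function $f$, and the velocity $\bdu$ is evaluated at the previous step $\bdB^{n-1}_h$. Therefore, at each fixed time level $t_n$ with $\bdB^{n-1}_h$ already known (by induction, starting from the prescribed $\bdB^0_h = \bdB_0$), solving \eqref{EFE1} reduces to a \emph{linear} problem for $\bdB^n_h$ in the finite-dimensional space $\boldsymbol{S}_h$.

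First, I would rewrite \eqref{EFE1} by multiplying through by $\tau$ and moving known quantities to the right, obtaining the variational equation
\begin{equation*}
a_\tau(\bdB^n_h, \bdA_h) = \ell^n(\bdA_h), \qquad \forall\, \bdA_h \in \boldsymbol{S}_h,
\end{equation*}
where
\begin{equation*}
a_\tau(\bdB, \bdA) = (\bdB, \bdA) + \tau\,(\beta_h \nabla \times \bdB, \nabla \times \bdA),
\end{equation*}
and the right hand side $\ell^n$ collects $(\bdB^{n-1}_h, \bdA_h)$ together with the $\tau$-scaled $\alpha$-quenching and advective terms evaluated at $\bdB^{n-1}_h$. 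Next I would verify the hypotheses of the Lax–Milgram lemma on the Hilbert space $\boldsymbol{S}_h \subset \boldsymbol{V}$ endowed with the $H(\curl;\Omega)$ graph norm. Continuity of $a_\tau$ is immediate from $0 < \beta_0 \le \beta_h \le \beta_M$ and Cauchy–Schwarz, while coercivity follows from
\begin{equation*}
a_\tau(\bdB, \bdB) \ge \|\bdB\|^2_{\bdL^2(\Omega)} + \tau \beta_0 \|\nabla \times \bdB\|^2_{\bdL^2(\Omega)} \ge \min(1, \tau\beta_0)\,\|\bdB\|^2_{H(\curl;\Omega)}.
\end{equation*}
The linear functional $\ell^n$ is bounded because $f^n_h \in L^\infty$, $\bdu^n_h \in \bdL^\infty$, the quenching factor $1/(1+\sigma|\bdB^{n-1}_h|^2)$ is bounded by $1$, and $\bdB^{n-1}_h$ is a fixed element of $\boldsymbol{S}_h$; these give $|\ell^n(\bdA_h)| \le C_n \|\bdA_h\|_{H(\curl;\Omega)}$ after a Cauchy–Schwarz estimate on each term. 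Lax–Milgram then yields a unique $\bdB^n_h \in \boldsymbol{S}_h$.

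Since $\boldsymbol{S}_h$ is finite-dimensional, an equivalent and perhaps more transparent viewpoint is to write the scheme as a linear algebraic system $(M + \tau K)\vec{b}^n = \vec{r}^n$, where $M$ and $K$ are the mass and weighted-curl stiffness matrices associated with a basis of $\boldsymbol{S}_h$; both are symmetric positive semidefinite, $M$ is SPD, and hence $M + \tau K$ is SPD and invertible. I would mention this matrix reformulation as confirmation of the Lax–Milgram argument. Finally, induction on $n$ starting from the given $\bdB^0_h = \bdB_0$ produces the unique sequence $\{\bdB^n_h\}_{n=1}^K$. I do not anticipate a serious obstacle here: the only care required is to keep the nonlinear term evaluated at $\bdB^{n-1}_h$ so that at each step we genuinely face a linear problem, and to confirm that $\ell^n$ depends only on quantities already determined at the previous level.
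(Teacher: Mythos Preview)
Your argument is correct, and in fact it is more direct than the paper's own proof. You exploit the essential feature of \eqref{EFE1}: because the nonlinear quenching factor and the advective term are evaluated at $\bdB^{n-1}_h$, the problem for $\bdB^n_h$ is genuinely \emph{linear}, and the bilinear form $a_\tau(\bdB,\bdA)=(\bdB,\bdA)+\tau(\beta_h\nabla\times\bdB,\nabla\times\bdA)$ is coercive on $\boldsymbol S_h$ for every $\tau>0$. Lax--Milgram (equivalently, invertibility of the SPD matrix $M+\tau K$) then gives existence and uniqueness in one stroke, with no restriction on the time step.

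The paper instead sets up a fixed-point map $F_h:\bar\bdB_h\mapsto\bdB_h$ and appeals to Brouwer's theorem for existence, which forces a smallness condition on $\tau$; uniqueness is then obtained by subtracting two solution sequences, summing over all time levels, and invoking the discrete Gronwall inequality. This machinery is inherited from the treatment of the fully implicit scheme in \cite{Chan2006} and is heavier than necessary here: for the semi-implicit scheme as written, each time step is a coercive linear problem and neither Brouwer nor Gronwall is needed. Your approach is the natural one for \eqref{EFE1} and yields a sharper result (no constraint on $\tau$ for well-posedness).
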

\begin{proof} %The existence and uniqueness of the solution to the system \eqref{EFE1} can be proved in the similar way with \cite{Chan2006}.  Since discrete equation and corresponding function space are different, we give the details for completeness.
	We use the similar technique as \cite{Chan2006} to prove the existence and uniqueness of the solution to the system \eqref{EFE1}. We first define
	a mapping $F_h : \bar{\bdB}_h \rightarrow \bdB_h$ by
	\begin{equation}
		a_h\left(\bdB_h, \bdA_h\right)
		= b_h\left(\bar{\bdB}_h, \bdA_h\right), \quad \forall \; \bdA_h \in \boldsymbol{S}_{h},
		\label{EFE2}
	\end{equation}
	where
	\begin{equation*}
		\begin{aligned}
			a_h\left(\bdB, \bdA\right)&=\left(\bdB, \bdA\right)+\tau\left(\beta_h \nabla \times \bdB, \nabla \times \bdA\right), \\
			b_h\left(\bdB, \bdA\right)&= \left(\bdB^{n-1}_h, \bdA\right)
			+\tau R_{\alpha}\left(\frac{f^n_h}{1+\sigma|\bdB^{n-1}_h|^{2}} \bdB, \nabla \times \bdA\right)
			+\tau R_{m}\left(\bdu^n_h \times \bdB, \nabla \times \bdA\right).
		\end{aligned}
	\end{equation*}
	It is clear that $a_h\left(\bdB, \bdA\right)$ is coercive in $\boldsymbol{S}_h$, thus there is a unique solution $\bdB_h \in \boldsymbol{S}_{h}$ for any fixed $\bar{\bdB}_h$.
	Then the mapping $F_h$ is well-defined.
	
	We next take $\bdA_h = \bdB_h$ in \eqref{EFE2} and use Young's inequality to obtain
	\begin{equation*}
		\begin{aligned}
			&\left\|\bdB_h\right\|^2 + \tau \left\|\nabla \times \bdB_h\right\|^2_\beta \\
			\leq & \left\|\bdB^{n-1}_h\right\| \left\|\bdB_h\right\| 
			+ \tau R_\alpha \left|f_{max}\right| \left\|\bar{\bdB}_h\right\| \left\|\nabla \times \bdB_h\right\| 
			+ \tau R_m \left|\bdu_{max}\right| \left\|\bar{\bdB}_h\right\| \left\|\nabla \times \bdB_h\right\| \\
			\leq & \frac{1}{2} \left\|\bdB^{n-1}_h\right\|^2 
			+ \frac{1}{2} \left\|\bdB_h\right\|^2
			+ \frac{\tau}{2}\left\|\nabla \times \bdB_h\right\|^2_\beta
			+ \frac{\tau R_\alpha^2}{\beta_h} \left|f_{max}\right|^2 		 
			\left\|\bar{\bdB}_h\right\|^2   	
			+ \frac{\tau R_m^2}{\beta_h} \left|\bdu_{max}\right|^2 
			\left\|\bar{\bdB}_h\right\|^2 \\
			\leq & \left\|\bdB^{n-1}_h\right\|^2 
			+ \frac{2 \tau}{\beta_h} \left(R_\alpha^2 \left|f_{max}\right|^2 + R_m^2 \left|\bdu_{max}\right|^2\right) \left\|\bar{\bdB}_h\right\|^2 .  
		\end{aligned}
	\end{equation*}
	Therefore, for any $\bar{\bdB}_h$ located in  $\boldsymbol{K}_h = \left\{\bdA_h | \bdA_h\in\boldsymbol{S}_{h}, \left\|\bdA_h\right\| \leq r \right\}$ with $r^2 = 2 \left\|\bdB^{n-1}_h\right\|^2$, then the following inequality holds
	\begin{equation*}
		\left\|\bdB_h\right\|^2 + \tau \left\|\nabla \times \bdB_h\right\|^2_\beta \leq r^2,
	\end{equation*}
	provided that $\tau$ is sufficiently small to make $\frac{4 \tau}{\beta_h} \left(R_\alpha^2 \left|f_{max}\right|^2 + R_m^2 \left|\bdu_{max}\right|^2\right) \leq 1$.
	The above result shows that the mapping $F_h$ is a continuous mapping from the bounded set $\boldsymbol{K}_h$ into itself.
	Further utilizing the Brouwer’s fixed point theorem, we can obtain the existence of the solution to system \eqref{EFE1}.  
	
	Then, we shall prove the uniqueness of this solution by contradiction.
	We take two different solutions $\bdB^n_{h,1}$ and $\bdB^n_{h,2} \in  \boldsymbol{S}_{h}$ that satisfy the system \eqref{EFE1}.
	And set $\bar{\bdB}^n_h=\bdB^n_{h,1}-\bdB^n_{h,2}$ which satisfies $\bar{\bdB}^0_h=0$. 
	Then, substituting $\bdB^n_{h,1}$ and $\bdB^n_{h,2}$ into system \eqref{EFE1} and subtracting them yields:
	\begin{equation*}
		\begin{aligned}
			\left(\partial_\tau  \bar{\bdB}^n_{h}, \bdA_{h}\right)
			+ & \left(\beta_h \nabla \times \bar{\bdB}^n_{h} , \nabla \times \bdA_{h}\right)\\
			= &R_\alpha \left(\frac{f^n_h\bdB^{n-1}_{h,1}}{1+\sigma\left|\bdB^{n-1}_{h,1}\right|^2}
			-\frac{f^n_h\bdB^{n-1}_{h,2}}{1+\sigma\left|\bdB^{n-1}_{h,2}\right|^2} , \nabla \times \bdA_{h}\right)
			+ R_m
			\left(\bdu^n_h \times \bar{\bdB}^{n-1}_{h}, \nabla \times \bdA_{h}\right).
		\end{aligned}
	\end{equation*}
	Taking $\bdA_{h}=\tau \bar{\bdB}^n_{h}$ yields
	\begin{equation*}
		\begin{aligned}
			&\frac{1}{2}\left\|\bar{\bdB}_h^n\right\|^2-\frac{1}{2}\left\|\bar{\bdB}_h^{n-1}\right\|^2+\beta_h \tau\left\|\nabla \times \bar{\bdB}_h^n\right\|^2 \\
			\leq & C \left\|\frac{ f^n_h \bdB^{n-1}_{h,1}}{1+\sigma\left|\bdB^{n-1}_{h,1}\right|^2}-\frac{ f^n_h \bdB^{n-1}_{h,2}}{1+\sigma\left|\bdB^{n-1}_{h,2}\right|^2}\right\| \left\|\nabla \times \bar{\bdB}^{n}_{h}\right\|_{\beta_h} 
			+ C \left\|\bdu^n_h \times \bar{\bdB}^{n-1}_{h}\right\| \left\|\nabla \times \bar{\bdB}^{n}_{h}\right\|_{\beta_h} \\
			\leq & \frac{1}{2} \beta_h \tau  \left\|\nabla \times \bar{\bdB}^{n}_{h}\right\|^2 
			+ C \left\|\frac{ f^n_h \bdB^{n-1}_{h,1}}{1+\sigma\left|\bdB^{n-1}_{h,1}\right|^2}-\frac{ f^n_h \bdB^{n-1}_{h,2}}{1+\sigma\left|\bdB^{n-1}_{h,2}\right|^2}\right\|^2
			+ C \left\|\bdu^n_h \times \bar{\bdB}^{n-1}_{h}\right\|^2.
		\end{aligned}
	\end{equation*}
	Then, accumulating it concerning n from 1 to K, and using the facts that $f\in L^\infty(\Omega)$ and $\bdu\in\bdL^\infty(\Omega)$, we obtain
	\begin{equation*}
		\begin{aligned}
			&\left\|\bar{\bdB}_h^K\right\|^2 + \tau \sum_{n=1}^K \left\|\nabla \times \bar{\bdB}_h^n\right\|_{\beta_h}^2  \\
			\leq & \left\|\bar{\bdB}_h^0\right\|^2
			+ C \sum_{n=1}^K \left\|\frac{ f^n_h \bdB^{n-1}_{h,1}}{1+\sigma\left|\bdB^{n-1}_{h,1}\right|^2}
			-\frac{ f^n_h \bdB^{n-1}_{h,2}}{1+\sigma\left|\bdB^{n-1}_{h,2}\right|^2}\right\|^2 
			+ C \sum_{n=1}^K \left\|\bdu^n_h \times \bar{\bdB}^{n-1}_{h}\right\|^2 \\
			%		\leq & C \sum_{n=1}^K \left\|\frac{f^n_h\bar{\bdB}^{n-1}_{h}\left(1+\sigma\left|\bdB^n_{h,2}\right|^2\right)
				%			+f^n_h\sigma\bdB^{n-1}_{h,2}\left|\bar{\bdB}^{n-1}_{h}\right|\left|\bdB^{n-1}_{h,1}+\bdB^{n-1}_{h,2}\right|}{\left(1+\sigma\left|\bdB^{n-1}_{h,1}\right|^2\right)\left(1+\sigma\left|\bdB^{n-1}_{h,2}\right|^2\right)}\right\|^2
			%		+ C \sum_{n=1}^K \left\|\bdu^n_h \times \bar{\bdB}^{n-1}_{h}\right\|^2 \\
			\leq & C \sum_{n=1}^K \left\|\frac{f^n_h\left|\bar{\bdB}^{n-1}_{h}\right|
				+\sigma f^n_h\left|\bar{\bdB}^{n-1}_{h}\right| \left|\bdB^{n-1}_{h,2}\right|^2
				+\sigma f^n_h\left|\bar{\bdB}^{n-1}_{h}\right| \left|\bdB^{n-1}_{h,2}\right|\left|\bdB^{n-1}_{h,1}+\bdB^{n-1}_{h,2}\right|}{\left(1+\sigma\left|\bdB^{n-1}_{h,1}\right|^2\right)\left(1+\sigma\left|\bdB^{n-1}_{h,2}\right|^2\right)}\right\|^2 \\ 
			&+ C \sum_{n=1}^K \left\|\bdu^n_h \times \bar{\bdB}^{n-1}_{h}\right\|^2
			\leq C \sum_{n=0}^{K-1} \left\|\bar{\bdB}^{n}_{h}\right\|^2,
		\end{aligned}
	\end{equation*}
	which implies $\bar{\bdB}^{n}_{h}=0$ by applying the discrete Gronwall inequality.
\end{proof}
%%%%%%%%%%%%%%%%%%
We next derive stability estimates of the solution to \eqref{EFE1}.
\begin{theorem}
	For the solution $\left\{\bdB^n_h\right\}^K_{n=0}$ to \eqref{EFE1}, 
	there exists a positive constant $C$ independent of $h$,  such that the following stability estimates hold.
	\begin{equation}
		\max_{1 \leq n \leq K}\left\|\bdB_h^n\right\|^2+\tau \sum_{n=1}^K \left\|\nabla \times \bdB_h^n\right\|_{\beta_h}^2 \leq C\left\|\bdB^0_h\right\|_{\boldsymbol{H}}^2,
		\label{WD3}
	\end{equation}
	\begin{equation}
		\max_{1 \leq n \leq K}\left\|\nabla \times \bdB_h^n\right\|_{\beta_h}^2 +\tau \sum_{n=1}^K \left\|\partial_\tau \bdB_h^n\right\|^2  \leq C\left\|\bdB^0_h\right\|_{\boldsymbol{H}}^2.
		\label{WD4}
	\end{equation}
\end{theorem}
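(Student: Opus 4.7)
My plan is to derive \eqref{WD3} by testing the scheme \eqref{EFE1} with $\bdA_h=\bdB_h^n$ and \eqref{WD4} by testing with $\bdA_h=2\tau\,\partial_\tau\bdB_h^n$. In both cases the main algebraic tool is the identity $(a^n-a^{n-1},a^n)=\tfrac12(\|a^n\|^2-\|a^{n-1}\|^2+\|a^n-a^{n-1}\|^2)$, combined with the coercivity $\beta_h\ge\beta_0$, Young's inequality, and the discrete Gronwall inequality. The required $L^\infty$ bounds on $f$ and $\bdu$ follow from $f\in H^1(0,T;L^\infty(\Omega))$ and $\bdu\in H^1(0,T;\bdL^\infty(\Omega))$.

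\textbf{Proof of \eqref{WD3}.} Taking $\bdA_h=\bdB_h^n$ in \eqref{EFE1} produces
\begin{equation*}
\tfrac{1}{2\tau}(\|\bdB_h^n\|^2-\|\bdB_h^{n-1}\|^2)+\|\nabla\times\bdB_h^n\|_{\beta_h}^2\le C\,\|\bdB_h^{n-1}\|\,\|\nabla\times\bdB_h^n\|_{\beta_h},
\end{equation*}
after dropping the nonnegative $\tfrac{\tau}{2}\|\partial_\tau\bdB_h^n\|^2$ contribution and using $\beta_h\ge\beta_0$. Young's inequality absorbs half of $\|\nabla\times\bdB_h^n\|_{\beta_h}^2$ into the left-hand side; multiplying by $2\tau$, summing from $n=1$ to $m$, and invoking the discrete Gronwall inequality then yields \eqref{WD3}.

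\textbf{Proof of \eqref{WD4}.} Taking $\bdA_h=2\tau\,\partial_\tau\bdB_h^n$ in \eqref{EFE1} and summing from $n=1$ to $m$ gives, after telescoping,
\begin{equation*}
2\tau\sum_{n=1}^m\|\partial_\tau\bdB_h^n\|^2+\|\nabla\times\bdB_h^m\|_{\beta_h}^2+\tau^2\sum_{n=1}^m\|\nabla\times\partial_\tau\bdB_h^n\|_{\beta_h}^2 = \|\nabla\times\bdB_h^0\|_{\beta_h}^2+2\sum_{n=1}^m(\boldsymbol{G}^{n-1},\nabla\times\bdB_h^n-\nabla\times\bdB_h^{n-1}),
\end{equation*}
where $\boldsymbol{G}^{n-1}=R_\alpha\tfrac{f_h^n}{1+\sigma|\bdB_h^{n-1}|^2}\bdB_h^{n-1}+R_m\,\bdu_h^n\times\bdB_h^{n-1}$. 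The central obstacle is that a naive Young's inequality applied to $2\tau(\boldsymbol{G}^{n-1},\nabla\times\partial_\tau\bdB_h^n)$ can absorb $\|\nabla\times\partial_\tau\bdB_h^n\|$ only against the $\tau^2$-weighted term on the left, which forces an undesirable $\tau^{-1}$ factor on $\|\boldsymbol{G}^{n-1}\|^2$ and prevents closure as $\tau\to 0$. I would circumvent this by Abel summation in $n$, rewriting the right-hand sum as the boundary pair $(\boldsymbol{G}^m,\nabla\times\bdB_h^m)-(\boldsymbol{G}^0,\nabla\times\bdB_h^0)$ minus $\sum_{n=1}^m(\boldsymbol{G}^n-\boldsymbol{G}^{n-1},\nabla\times\bdB_h^n)$. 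The endpoint at $m$ is absorbed into a small fraction of $\|\nabla\times\bdB_h^m\|_{\beta_h}^2$ on the left (with $\|\bdB_h^m\|$ already controlled by \eqref{WD3}), while the endpoint at $0$ contributes only $C\|\bdB_h^0\|_{\boldsymbol{H}}^2$. Expanding $\boldsymbol{G}^n-\boldsymbol{G}^{n-1}$ by the discrete product rule gives $O(\tau)$ pieces involving $\partial_\tau f_h$, $\partial_\tau\bdu_h$ (summable thanks to the $H^1(0,T;L^\infty)$ hypothesis) and $\partial_\tau\bdB_h^n$; the latter is absorbed into $2\tau\sum\|\partial_\tau\bdB_h^n\|^2$ on the left using Young's with a small constant, and the residual $\|\nabla\times\bdB_h^n\|$ factors are controlled by \eqref{WD3}. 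A final application of the discrete Gronwall inequality delivers \eqref{WD4}.
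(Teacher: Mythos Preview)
Your proposal is correct and follows essentially the same route as the paper: for \eqref{WD3} the paper also tests with $\bdB_h^n$, applies Young's inequality and the discrete Gronwall lemma, and for \eqref{WD4} it likewise tests with $\tau\partial_\tau\bdB_h^n$, performs the same Abel-type rearrangement of $\sum_n(\boldsymbol{G}^{n-1},\nabla\times\bdB_h^n-\nabla\times\bdB_h^{n-1})$ into endpoint terms plus $\sum_n(\boldsymbol{G}^n-\boldsymbol{G}^{n-1},\nabla\times\bdB_h^n)$, expands the latter via the discrete product rule, and closes with \eqref{WD3} and Gronwall. The only cosmetic differences are that the paper handles the curl telescoping via Cauchy--Schwarz on $(\beta_h\nabla\times\bdB_h^n,\nabla\times\bdB_h^{n-1})$ rather than the polarization identity, and your top endpoint should read $(\boldsymbol{G}^{m-1},\nabla\times\bdB_h^m)$ rather than $(\boldsymbol{G}^m,\nabla\times\bdB_h^m)$---an inconsequential index slip.
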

\begin{proof}
	For the first inequality, we first take $\bdA_h=\tau \bdB_h^n$ in \eqref{EFE1} and use the lemmas from the preliminaries, which yields
	\begin{equation*}
		\begin{aligned}
			&\frac{1}{2}\left\|\bdB_h^n\right\|^2-\frac{1}{2}\left\|\bdB_h^{n-1}\right\|^2+\beta_h \tau\left\|\nabla \times \bdB_h^n\right\|^2 \\
			\leq & R_\alpha \tau\left\|f^n_h \bdB_h^{n-1}\right\|\left\|\nabla \times \bdB_h^n\right\| 
			+ R_m \tau\left\|\bdu^n_h \times \bdB_h^{n-1}\right\| \left\|\nabla \times \bdB_h^n\right\| \\
			\leq & C \tau\left\|\bdB_h^{n-1}\right\|^2 + \frac{1}{2} \beta_h \tau\left\|\nabla \times \bdB_h^n\right\|^2.
		\end{aligned}
	\end{equation*}
	Simplifying the above equation, we can obtain 
	\begin{equation*}
		\left\|\bdB_h^n\right\|^2-\left\|\bdB_h^{n-1}\right\|^2+\beta_h \tau\left\|\nabla \times \bdB_h^n\right\|^2
		\leq  C \tau\left\|\bdB_h^{n-1}\right\|^2.
	\end{equation*}
	Then, accumulating it concerning $n$ from 1 to $K$, which yields
	\begin{equation*}
		\left\|\bdB_h^K\right\|^2 + \tau \sum_{n=1}^K \left\|\nabla \times \bdB_h^n\right\|_{\beta_h}^2 
		\leq \left\|\bdB_h^0\right\|^2 
		+ C \tau \sum_{n=0}^{K-1} \left\|\bdB_h^n\right\|^2.
	\end{equation*}
	Finally, using the discrete Gronwall's inequality, we can obtain
	\begin{equation*}
		\left\|\bdB_h^n\right\|^2+\tau \sum_{n=1}^K \left\|\nabla \times \bdB_h^n\right\|_{\beta_h}^2 
		\leq C \left\|\bdB^0_h\right\|^2
		\leq C \left(\left\|\bdB^0_h\right\|^2 + \left\|\nabla \times \bdB^0_h\right\|^2\right).
	\end{equation*}
	which implies \eqref{WD3}.
	
	For the second stability estimates \eqref{WD4}, taking
	$ \bdA_{h} = \tau \partial_\tau \bdB_{h}^{n} =  \bdB_{h}^{n} - \bdB_{h}^{n-1} $, we obtain
	\begin{equation*}
		\begin{aligned}
			&\tau\left\|\partial_{\tau} \bdB_{h}^{n}\right\|^{2}
			+  \left(\beta_h \nabla \times \bdB_{h}^{n}, \nabla \times \bdB_{h}^{n}\right) 
			= \tau\left\|\partial_{\tau} \bdB_{h}^{n}\right\|^{2}
			+  \left\|\nabla \times \bdB_{h}^{n}\right\|_{\beta_h}^2\\
			=&\left(\beta_h \nabla \times \bdB_{h}^{n}, \nabla \times \bdB_{h}^{n-1}\right)
			+  R_{\alpha}  \left(\frac{f^n_h \bdB_{h}^{n-1}}{1+\sigma\left|\bdB_{h}^{n-1}\right|^{2}}  , \tau\nabla  \times \partial_{\tau}\bdB_{h}^{n}\right)   +  R_{m} \left(\bdu^n_h \times \bdB_{h}^{n-1} , \tau\nabla  \times \partial_{\tau}\bdB_{h}^{n}\right) \\
			\leq & \frac{1}{2}\left\|\nabla \times \bdB_{h}^{n}\right\|_{\beta_h}^2 
			+ \frac{1}{2}\left\|\nabla \times \bdB_{h}^{n-1}\right\|_{\beta_h}^2\\
			&+  R_{\alpha}  \left(\frac{f^n_h \bdB_{h}^{n-1}}{1+\sigma\left|\bdB_{h}^{n-1}\right|^{2}}  , \tau\nabla  \times \partial_{\tau}\bdB_{h}^{n}\right)   +  R_{m} \left(\bdu^n_h \times \bdB_{h}^{n-1} , \tau\nabla  \times \partial_{\tau}\bdB_{h}^{n}\right).
		\end{aligned}
	\end{equation*}
	Simplifying the above inequality 
	\begin{equation*}
		\begin{aligned}
			&\tau\left\|\partial_{\tau} \bdB_{h}^{n}\right\|^{2}
			+  \frac{1}{2} \left(\left\|\nabla \times \bdB_{h}^{n}\right\|_{\beta_h}^2 - \left\|\nabla \times \bdB_{h}^{n-1}\right\|_{\beta_h}^2\right) \\
			\leq & R_{\alpha}  \left(\frac{f^n_h \bdB_{h}^{n-1}}{1+\sigma\left|\bdB_{h}^{n-1}\right|^{2}}  , \tau\nabla  \times \partial_{\tau}\bdB_{h}^{n}\right)   +  R_{m} \left(\bdu^n_h \times \bdB_{h}^{n-1} ,\tau \nabla  \times \partial_{\tau}\bdB_{h}^{n}\right),
		\end{aligned}
	\end{equation*}
	and summing up the above inequality from $1$ to $K$ yields
	\begin{equation}
		\begin{aligned}
			&\tau \sum_{n=1}^{K} \left\|\partial_{\tau} \bdB_{h}^{n}\right\|^{2}
			+  \frac{1}{2} \left\|\nabla \times \bdB_{h}^{K}\right\|_{\beta_h}^2 \\
			\leq & \frac{1}{2} \left\|\nabla \times \bdB_{h}^{0}\right\|_{\beta_h}^2
			+ R_{\alpha} \sum_{n=1}^{K} \left(\frac{f^n_h \bdB_{h}^{n-1}}{1+\sigma\left|\bdB_{h}^{n-1}\right|^{2}}  , \tau\nabla  \times \partial_{\tau}\bdB_{h}^{n}\right)   
			+  R_{m} \sum_{n=1}^{K} \left(\bdu^n_h \times \bdB_{h}^{n-1} , \tau\nabla  \times \partial_{\tau}\bdB_{h}^{n}\right).
		\end{aligned}
		\label{WD5}
	\end{equation}
	For the second term at the right hand side of \eqref{WD5}, after rearranging the summation, we have  
	\begin{equation*}
		\begin{aligned}
			&\sum_{n=1}^{K} \int_{\Omega} \frac{f^{n}_h \bdB_{h}^{n-1}}{1+\sigma\left|\bdB_{h}^{n-1}\right|^{2}}  \cdot \nabla \times\left(\bdB_{h}^{n}-\bdB_{h}^{n-1}\right) d \bdx\\
			= & \int_{\Omega}\frac{f^{K}_h \bdB_{h}^{K-1}}{1+\sigma\left|\bdB_{h}^{K-1}\right|^{2}} \cdot \nabla \times \bdB_{h}^{K} d \bdx
			- \int_{\Omega} \frac{f^{1}_h \bdB_{h}^{0}}{1+\sigma\left|\bdB_{h}^{0}\right|^{2}} \cdot \nabla \times \bdB_{h}^{0} d \bdx \\
			& - \sum_{n=1}^{K-1} \int_{\Omega} \left(\frac{f^{n+1}_h \bdB_{h}^{n}}{1+\sigma\left|\bdB_{h}^{n}\right|^{2}}
			-\frac{f^{n}_h \bdB_{h}^{n-1}}{1+\sigma\left|\bdB_{h}^{n-1}\right|^{2}}\right) \cdot \nabla \times \bdB_{h}^{n}d \bdx \\
			=& \int_{\Omega}\frac{f^{K}_h \bdB_{h}^{K-1}}{1+\sigma\left|\bdB_{h}^{K-1}\right|^{2}} \nabla \times \bdB_{h}^{K}  d \bdx
			- \int_{\Omega} \frac{f^{1}_h \bdB_{h}^{0}}{1+\sigma\left|\bdB_{h}^{0}\right|^{2}} \nabla \times \bdB_{h}^{0} d \bdx \\
			& - \sum_{n=1}^{K-1} \int_{\Omega}  \frac{f^{n+1}_h \bdB_{h}^{n} - f^{n}_h \bdB_{h}^{n-1} + f^{n+1}_h \bdB_{h}^{n} \sigma \left| \bdB_{h}^{n-1} \right|^{2} - f^{n}_h \bdB_{h}^{n-1} \sigma\left|\bdB_{h}^{n}\right|^{2} }{\left(1+\sigma\left|\bdB_{h}^{n}\right|^{2}\right)\left(1+\sigma\left|\bdB_{h}^{n-1}\right|^{2}\right)} \cdot\nabla \times \bdB_{h}^{n} d \bdx. 
		\end{aligned}
	\end{equation*}
	Then, applying the Young's inequality, we can obtain
	\begin{equation*}
		\begin{aligned}
			& \left|\sum_{n=1}^{K} \int_{\Omega} \frac{f^{n}_h \bdB_{h}^{n-1}}{1+\sigma\left|\bdB_{h}^{n-1}\right|^{2}}  \cdot \nabla \times\left(\bdB_{h}^{n}-\bdB_{h}^{n-1}\right) d \bdx\right|\\
			\leq & \frac{1}{8} \left\|\nabla \times \bdB_{h}^{K} \right\|_{\beta_h}^2
			+ C \left\|\bdB_{h}^{K-1}\right\|^2
			+ \frac{1}{8} \left\|\nabla \times \bdB_{h}^{0} \right\|_{\beta_h}^2
			+ C \left\|\bdB_{h}^{0}\right\|^2 \\
			& + \sum_{n=1}^{K-1} \int_{\Omega}  \frac{\left|f^{n+1}_h \left(\bdB_{h}^{n} -  \bdB_{h}^{n-1}\right) + \left(f^{n+1}_h  - f^{n}_h \right)\bdB_{h}^{n-1}\right|}{\left(1+\sigma\left|\bdB_{h}^{n}\right|^{2}\right)\left(1+\sigma\left|\bdB_{h}^{n-1}\right|^{2}\right)} \cdot\left|\nabla \times \bdB_{h}^{n}\right| d \bdx \\
			& +  \sum_{n=1}^{K-1} \int_{\Omega}  \frac{\left|f^{n+1}_h \bdB_{h}^{n} \sigma \left| \bdB_{h}^{n-1} \right|^{2} - f^{n}_h \bdB_{h}^{n-1} \sigma\left|\bdB_{h}^{n}\right|^{2} \right|}{\left(1+\sigma\left|\bdB_{h}^{n}\right|^{2}\right)\left(1+\sigma\left|\bdB_{h}^{n-1}\right|^{2}\right)} \cdot\left|\nabla \times \bdB_{h}^{n}\right| d \bdx. 
		\end{aligned}
	\end{equation*}
	We next use Young's inequality again to obtain
	\begin{equation*}
		\begin{aligned}
			& \left|\sum_{n=1}^{K} \int_{\Omega} \frac{f^{n}_h \bdB_{h}^{n-1}}{1+\sigma\left|\bdB_{h}^{n-1}\right|^{2}}  \cdot \nabla \times\left(\bdB_{h}^{n}-\bdB_{h}^{n-1}\right) d \bdx\right|
			\\
			\leq & \frac{1}{8} \left\|\nabla \times \bdB_{h}^{K} \right\|_{\beta_h}^2
			+ C \left\|\bdB_{h}^{K-1}\right\|^2
			+ \frac{1}{8} \left\|\nabla \times \bdB_{h}^{0} \right\|_{\beta_h}^2
			+ C \left\|\bdB_{h}^{0}\right\|^2 \\
			& + \frac{1}{8}\tau \sum_{n=1}^{K-1} \left\|\partial_\tau \bdB_{h}^{n}\right\|^2  
			+ C \tau \sum_{n=1}^{K-1} \left\|\nabla \times \bdB_{h}^{n}\right\|_{\beta_h}^2
			+ C \sum_{n=1}^{K-1} \left\|\bdB_{h}^{n-1}\right\|^2  
			+ C \tau \sum_{n=1}^{K-1} \left\|\nabla \times \bdB_{h}^{n}\right\|_{\beta_h}^2\\
			& +  \sigma \sum_{n=1}^{K-1} \int_{\Omega}  \frac{\left|f^{n+1}_h  \left| \bdB_{h}^{n-1} \right|^{2} \left(\bdB_{h}^{n} -\bdB_{h}^{n-1}\right) 
				+ \left(f^{n+1}_h -f^{n}_h\right) \bdB_{h}^{n-1} \left|\bdB_{h}^{n-1}\right|^2 \right|}{\left(1+\sigma\left|\bdB_{h}^{n}\right|^{2}\right)\left(1+\sigma\left|\bdB_{h}^{n-1}\right|^{2}\right)} \cdot\left|\nabla \times \bdB_{h}^{n}\right| d \bdx \\
			& +  \sigma \sum_{n=1}^{K-1} \int_{\Omega}  \frac{\left| f^{n}_h \bdB_{h}^{n-1} \left(\bdB_{h}^{n-1} + \bdB_{h}^{n}\right)
				\left(\bdB_{h}^{n-1} - \bdB_{h}^{n}\right) \right|}{\left(1+\sigma\left|\bdB_{h}^{n}\right|^{2}\right)\left(1+\sigma\left|\bdB_{h}^{n-1}\right|^{2}\right)} \cdot\left|\nabla \times \bdB_{h}^{n}\right| d \bdx \\
		\end{aligned}
	\end{equation*}
	By using the conclusion \eqref{WD3}, we obtain
	\begin{equation}
		\begin{aligned}
			& \left|\sum_{n=1}^{K} \int_{\Omega} \frac{f^{n}_h \bdB_{h}^{n-1}}{1+\sigma\left|\bdB_{h}^{n-1}\right|^{2}}  \cdot \nabla \times\left(\bdB_{h}^{n}-\bdB_{h}^{n-1}\right) d \bdx\right|
			\\
			\leq & \frac{1}{8} \left\|\nabla \times \bdB_{h}^{K} \right\|_{\beta_h}^2
			+ \frac{1}{2}\tau \sum_{n=1}^{K-1} \left\|\partial_\tau \bdB_{h}^{n}\right\|^2
			+ C \left\|\bdB_{h}^{K-1}\right\|^2
			+ \frac{1}{8} \left\|\nabla \times \bdB_{h}^{0} \right\|_{\beta_h}^2
			+ C \left\|\bdB_{h}^{0}\right\|^2  \\
			& + C \tau \sum_{n=1}^{K-1} \left\|\nabla \times \bdB_{h}^{n}\right\|_{\beta_h}^2
			+ C \sum_{n=1}^{K-1} \left\|\bdB_{h}^{n-1}\right\|^2  \\
			\leq & \frac{1}{8} \left\|\nabla \times \bdB_{h}^{K} \right\|_{\beta_h}^2
			+ \frac{1}{2}\tau \sum_{n=1}^{K} \left\|\partial_\tau \bdB_{h}^{n}\right\|^2  
			+ C\left(\left\|\bdB_{h}^{0}\right\|^2 + \left\|\nabla \times \bdB_{h}^{0}\right\|_{\beta_h}^2\right).\\
		\end{aligned}
		\label{SL4}
	\end{equation}
	
	Similarly, simplifying the third term at the right hand side of \eqref{WD5} yields
	\begin{equation}
		\begin{aligned}
			& \left|R_{m} \sum_{n=1}^{K} \int_{\Omega}  \left(\bdu^n_h \times \bdB_{h}^{n-1} , \nabla  \times \partial_{\tau}\bdB_{h}^{n}\right) d \bdx\right|
			\\
			\leq & \frac{1}{8} \left\|\nabla \times \bdB_{h}^{K} \right\|_{\beta_h}^2
			+ \frac{1}{4}\tau \sum_{n=1}^{K} \left\|\partial_\tau \bdB_{h}^{n}\right\|^2  
			+ C\left(\left\|\bdB_{h}^{0}\right\|^2 + \left\|\nabla \times \bdB_{h}^{0}\right\|_{\beta_h}^2\right).\\
		\end{aligned}
		\label{SL5}
	\end{equation}
	
	Combining \eqref{SL4} and \eqref{SL5}, then substituting them into \eqref{WD5}, we can obtain
	\begin{equation*}
		\begin{aligned}
			&\frac{1}{4} \tau \sum_{n=1}^{K} \left\|\partial_{\tau} \bdB_{h}^{n}\right\|^{2}
			+  \frac{1}{4} \left\|\nabla \times \bdB_{h}^{K}\right\|_{\beta_h}^2 
			\leq  \frac{1}{2} \left\|\nabla \times \bdB_{h}^{0}\right\|_{\beta_h}^2
			+ C\left(\left\|\bdB_{h}^{0}\right\|^2 + \left\|\nabla \times \bdB_{h}^{0}\right\|_{\beta_h}^2\right),
		\end{aligned}
	\end{equation*}
	which implies \eqref{WD4} by the discrete Gronwall's inequality.
\end{proof}

%%%%%%%%%%%%%%%%%%%%%%%%%%%%%%%%%%%%%%%%%%%%%%%%%%%%%%%%%%%%%%%%%%%%%%%%%%%%%%%%%%

%%%%%%%%%%%%%%%%%%%%%%%%%%%%%%%%%%%%%%%%%%%%%%%%%%%%%%%%%%%%%%%%%%%%%%%%%%%%%%%%%%

\section{Numerical results}\label{sect5}
In this section, we shall demonstrate the rationality of the spherical interface dynamo model \eqref{Z1} and the efficiency of the numerical scheme \eqref{EFE1} by convergence test and time evolution simulation. 
The simulations in this paper are all implemented based on the parallel adaptive finite element program development platform PHG \cite{PHG}.
The settings of the simulation are similar with those in \cite{Zhang2003,Chan2008,Cheng2020}. 
For the spherical region $\Omega$, without special specification, the radius of four subregions $\Omega_i(i = 1,2,3,4)$ are taken as $r_1 = 1.5$, $r_2 = 1.875$, $r_3 = 2.5$, and $r_4 = 7.5$.
\subsection{Convergence test}
%The experiment is designed only to verify the convergence of the numerical scheme, thus additional source term is added into the dynamical system to accurately solve the nonlinear system.

We perform convergence tests on the unit ball for simplifying the calculations. The radius of the four subregions is scaled down equally. 
And we take the exact solution $\bdB_T(\bdx,t)=(\bdB_x, \bdB_y, \bdB_z)$ which satisfies the divergence-free condition as shown below.
\begin{numcases}{}
	\bdB_x = y \cdot e^{-t / m} \cdot (x^2 - 2  x  z + y^2 + 3  z^2 - 1), \nonumber \\
	\bdB_y = z \cdot e^{-t / m} \cdot (3  x^2 - 2  x  y + y^2 + z^2 - 1), \nonumber \\
	\bdB_z = x \cdot e^{-t / m} \cdot (x^2 + 3  y^2 - 2  y  z + z^2 - 1). \nonumber
\end{numcases}
The remaining parameters are as follows,
\begin{equation*}
	\begin{aligned}
		&\beta_1 = \beta_2 = \beta_3 =\beta_4 = 1, \quad  R_m = 1, \quad  R_{\alpha} = 1, \quad  \sigma = 1,\\
		&f(\bdx, t) = x^2 + y^2 + z^2, \quad  
		\bdu(\bdx, t) =\left(x^2yz, \, y^2xz, \, z^2xy\right).
	\end{aligned}
\end{equation*}
Additional source term should be added into the dynamical system to make $\bdB_T(\bdx,t)$ solve the nonlinear system exactly.
\begin{table}[htbp]
	\centering
	\caption{The error of $\bdB_h$ and $\nabla \times \bdB_h$ with constant time step on $\left[0,\,1\right]$. }
	\label{tab:1}
	\begin{tabular}{c|c|c|c|c|c}
		\Xhline{1.5pt}
		\multicolumn{2}{c|}{Time-space dimensions} & \multicolumn{2}{c|}{Error-$\bdB_h$}  & \multicolumn{2}{c}{Error-$\nabla \times \bdB_h$} \\
		\Xhline{1.2pt}
		$h_{\max}$ & $\tau$ & $\left\|\bdB_h - \bdB_T\right\|_{L^2}$ & Rate & $\left\|\nabla \times \left(\bdB_h - \bdB_T\right)\right\|_{L^2}$ & Rate \\
		\hline
		0.584484 & 0.1 & 2.169e-02 & —— & 2.614e-01 & —— \\
		\hline
		0.292247 & 0.1 & 7.244e-03 & 1.582 & 1.366e-01 & 0.936 \\
		\hline
		0.189921 & 0.1 & 3.248e-03 & 1.861 & 7.023e-02 & 1.544 \\
		\Xhline{1.5pt}
	\end{tabular}
\end{table}
With the numerical scheme \eqref{EFE1}, we integrate from $t=0$ to $t = 1$ in each mesh.
We fix the time step $\tau = 0.1$ and $m = 100$ to investigate the influence of the spatial mesh size on the magnetic field error and present the results in Table~\ref{tab:1}.
From the table, we can see that the error decreases in a certain ratio as the grid is continuously refined.
Meanwhile, to better show the results, we show the magnetic field distribution of the exact solution as well as the numerical solution in Figure~\ref{fig:A1}. It can be seen that the exact and numerical solutions are in good agreement.
Further, we fix the spatial step $h_{\max} = 0.189921$ and take $m=1$ to investigate the influence of the time step size on the magnetic field error and present the results in Table~\ref{tab:2}. Similar results can be found from the data. 

With these two situations,  we can conclude that the exactly solution can be approximated very well by using the edge element method.
\begin{figure}[htbp]
	\centering
	\subfloat[$\bdB_T$]{\includegraphics[width=.46\linewidth]{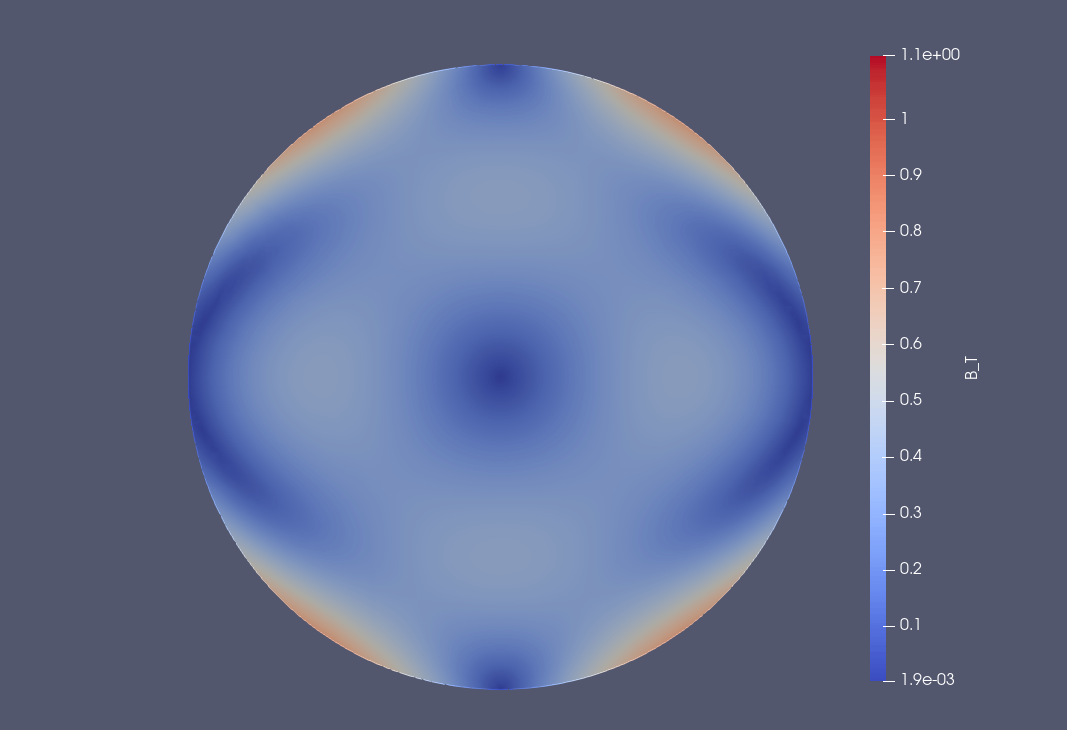}}\hspace{5pt}
	\subfloat[$\bdB_h$]{\includegraphics[width=.46\linewidth]{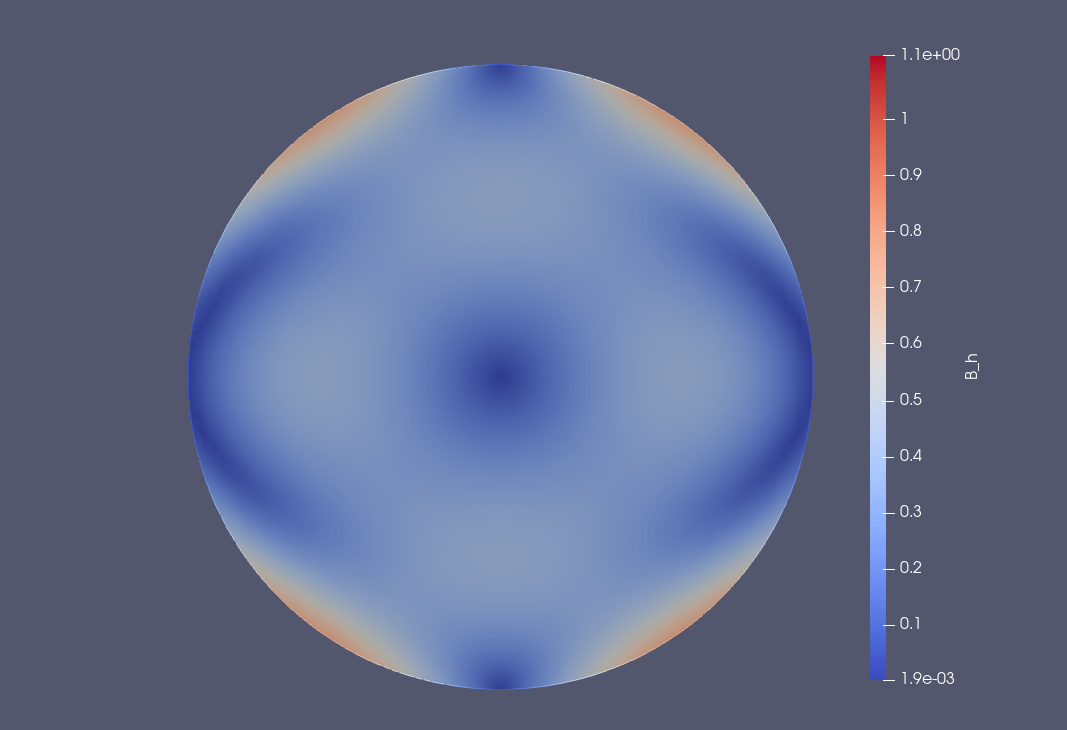}}
	\caption{The magnetic field distribution of the exact solution (left) and the numerical solution (right). \label{fig:A1}}
\end{figure}
\begin{table}[htbp]
	\centering
	\caption{The error of $\bdB_h$ and $\nabla \times \bdB_h$ with constant space step on $\left[0,\,1\right]$. }
	\label{tab:2}
	\begin{tabular}{c|c|c|c|c|c}
		\Xhline{1.5pt}
		\multicolumn{2}{c|}{Time-space dimensions} & \multicolumn{2}{c|}{Error-$\bdB$}  & \multicolumn{2}{c}{Error-$\nabla \times \bdB$} \\
		\Xhline{1.2pt}
		$h_{\max}$ & $\tau$ & $\left\|\bdB_h - \bdB_T\right\|_{L^2}$ & Rate & $\left\|\nabla \times \left(\bdB_h - \bdB_T\right)\right\|_{L^2}$ & Rate \\
		\hline
		0.189921 & 0.5 & 1.180e-01 & —— & 4.965e-01 & —— \\
		\hline
		0.189921 & 0.25 & 5.497e-02 & 1.102 & 2.319e-01 & 1.098 \\
		\hline
		0.189921 & 0.1 & 2.084e-02 & 1.059 & 9.105e-02 & 1.020 \\
		\Xhline{1.5pt}
	\end{tabular}
\end{table}

\subsection{Time evolution simulation}

In this part we perform the time evolution of the spherical interface dynamo system to simulate the mechanism of magnetic field generation on the Sun.

We take the corresponding magnetic diffusivity as $\beta_1 = 1$, $\beta_2 = 1$, $\beta_3 = 1$, and $\beta_4 = 150$.
Physically speaking, the $\alpha$-effect term of model \eqref{Z1} is used since it does not involve a strong nonlinear interaction between the flow and the Lorentz force \cite{Zhang2003}. Where, the positive parameter $\sigma$ is taken as 1 and the function $f$ as 
\begin{equation*}
	f\left(r,\theta,\phi\right)=\sin^2\theta\cos\theta\sin\left[\pi\frac{r-r_2}{r_3-r_2}\right],
\end{equation*}
where $\left(r,\theta,\phi\right)$ is the spherical polar coordinate.
For the other term, depending on the model settings, we choose the velocity field 
\begin{equation*}
	\bdu\left(r,\theta,\phi\right) = \left(u_r,u_\theta,u_\phi\right)=\left\{0,0,\Omega_t(\theta)r\sin\theta\sin\left[\pi\frac{r-r_1}{r_2-r_1}\right]\right\},
\end{equation*}
which acts only on the tachocline.  Where $\Omega_t(\theta)$ approximates the observed profile of the solar differential rotation \cite{Zhang2003}, which is chosen as the following three-term expression
\begin{equation*}
	\Omega_t(\theta) = 1 - 0.1264 \cos^2\theta - 0.1591\cos^4\theta.
\end{equation*}

\begin{figure}[!htbp]
	\centering
	\includegraphics[width=0.7\textwidth]{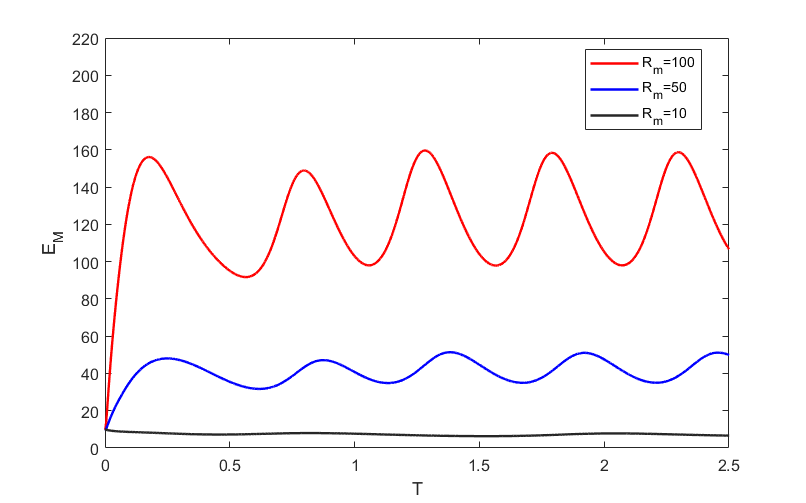}
	\caption{For $R_\alpha = 30$, $R_m = 10,50,100$, the magnetic field energy at different moments $E_M$.   \label{fig:En_M} }
\end{figure}

We take the initial value $\bdB_0=\left(\bdB_r,\bdB_\theta,\bdB_\phi\right)$ in $\Omega_1,\Omega_2$ and $\Omega_3$. Otherwise, we take $\bdB_0=0$. Where
\begin{numcases}{}
	\bdB_r = 2 \cos \theta r \left(r-r_3\right)^2 / r_3^2, \nonumber \\
	\bdB_\theta = - \sin \theta \left(3 r \left(r-r_3\right)^2 + 2 r^2 \left(r-r_3\right) \right) / r_3^2, \nonumber \\
	\bdB_\phi = 3 \cos \theta \sin \theta r^2 \left(r-r_3\right)^2 / r_3^2. \nonumber
\end{numcases}
\begin{figure}[!htbp]
	\centering
	\subfloat[t=0.0]{\includegraphics[width=.46\linewidth]{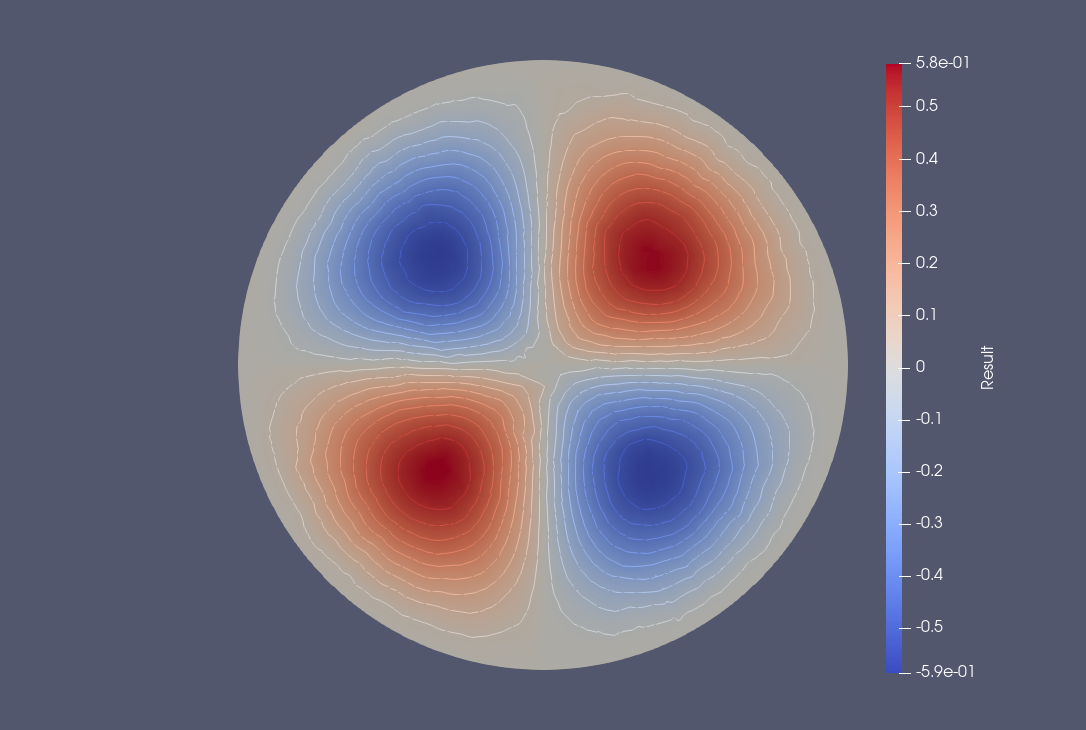}}\hspace{5pt}
	\subfloat[t=1.0]{\includegraphics[width=.46\linewidth]{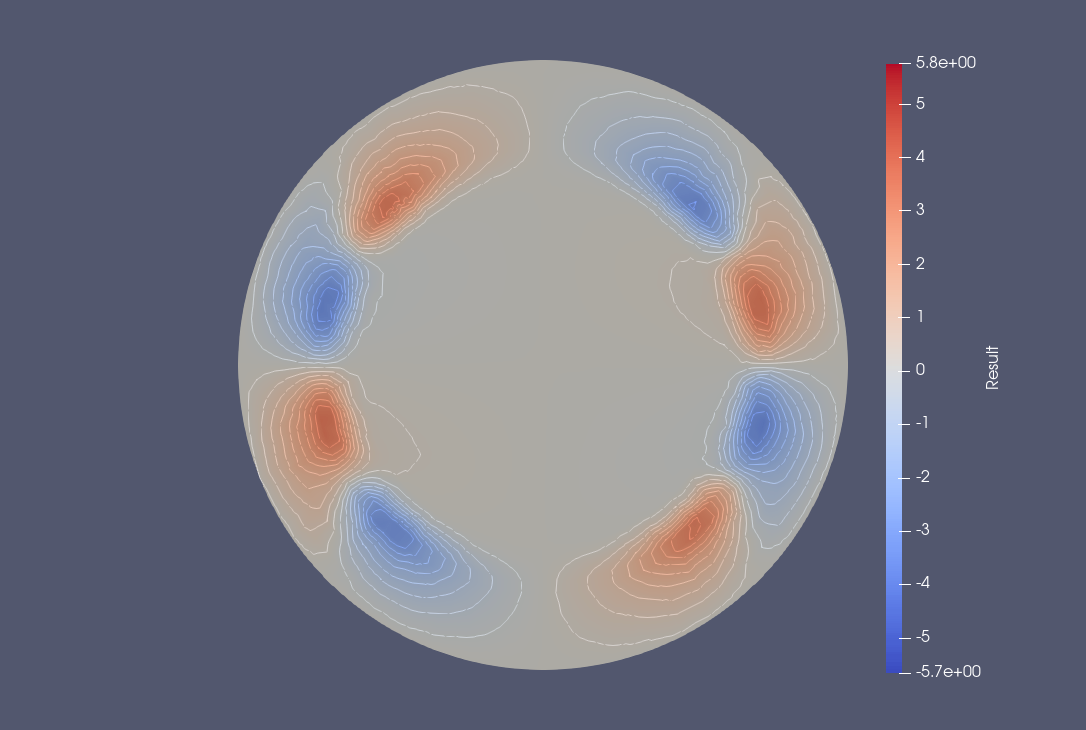}}\vspace{-2pt}
	\subfloat[t=1.3]{\includegraphics[width=.46\linewidth]{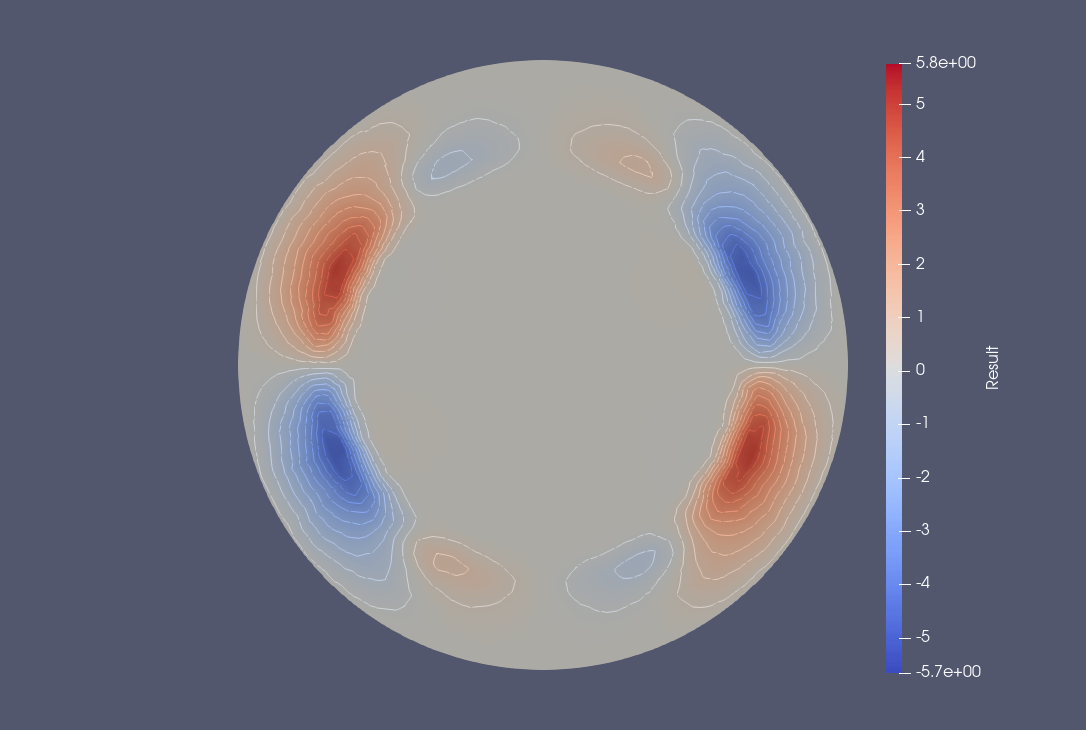}}\hspace{5pt}
	\subfloat[t=1.6]{\includegraphics[width=.46\linewidth]{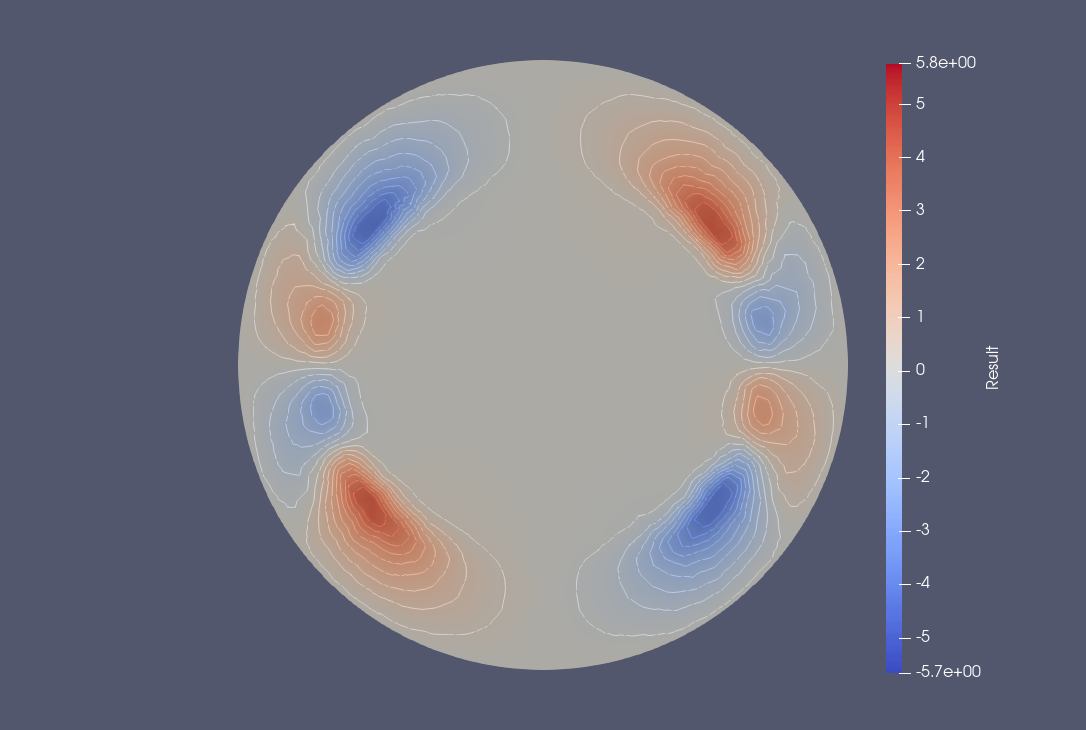}}\vspace{-2pt}
	\subfloat[t=1.9]{\includegraphics[width=.46\linewidth]{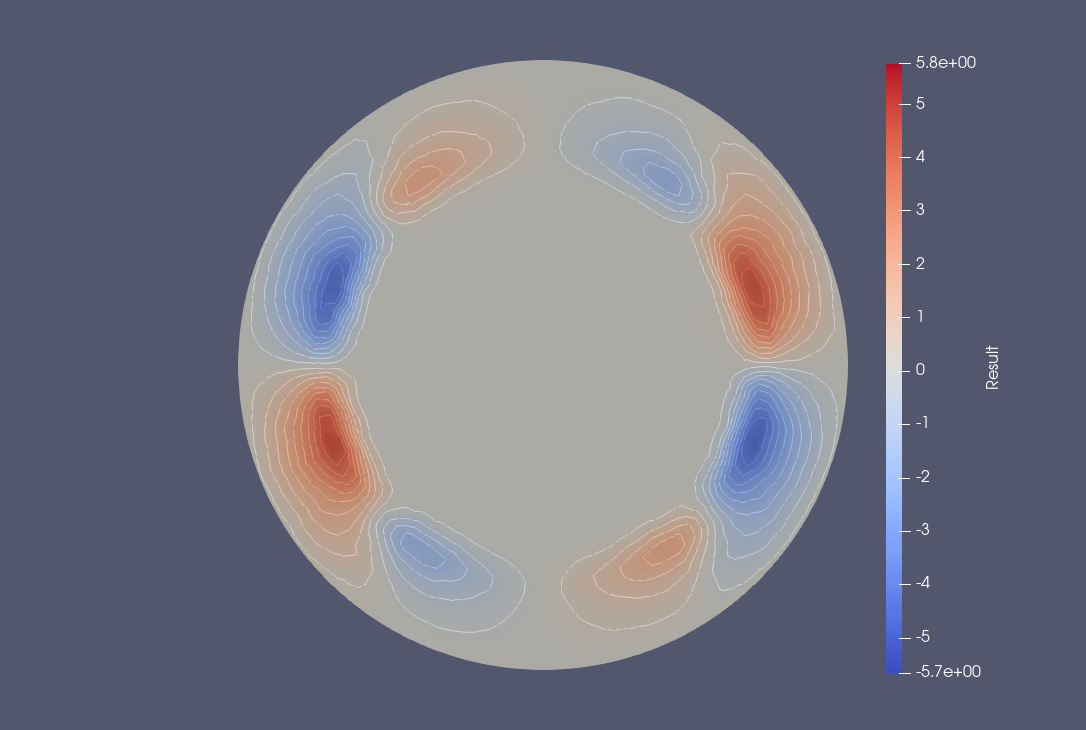}}\hspace{5pt}
	\subfloat[t=2.2]{\includegraphics[width=.46\linewidth]{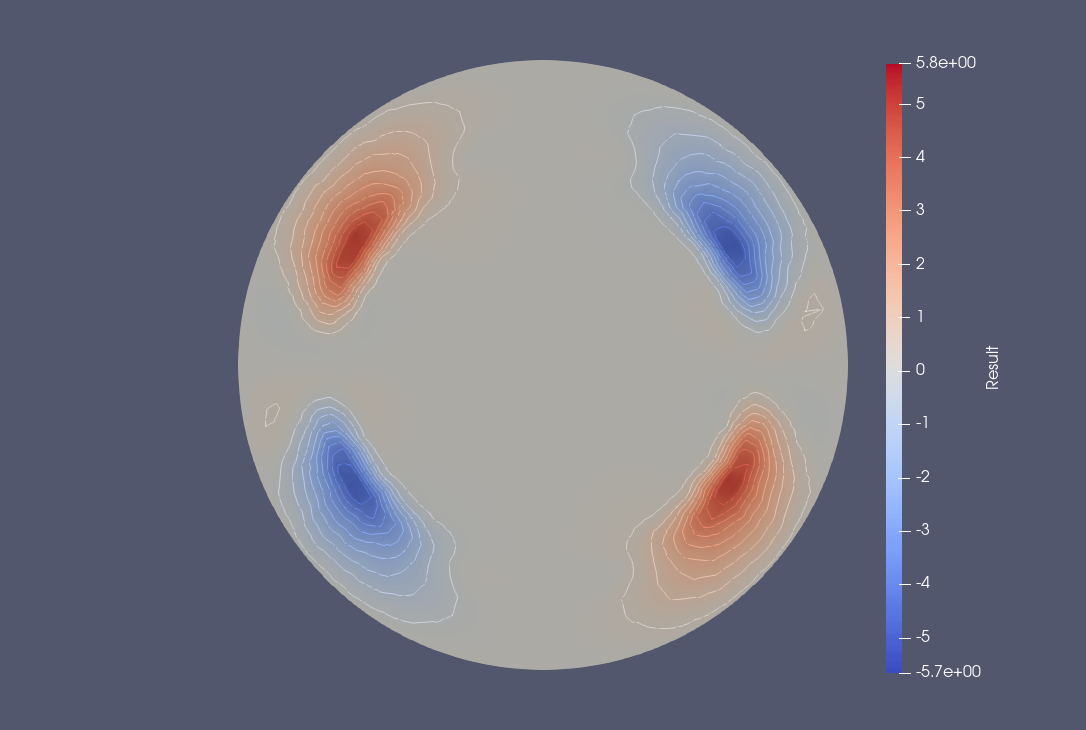}}
	\caption{Contour maps of the azimuthal field $\bdB_\phi$ in the $y-z$ plane at different moments $t$. The magnetic Reynolds number $R_\alpha = 30$, $R_m = 100$ with magnetic diffusivity $\beta_1 = 1$, $\beta_2 = 1$, $\beta_3 = 1$, and $\beta_4 = 150$. \label{fig:C1}}
\end{figure}
With the above settings, we simulate the generation mechanism of the solar interface dynamo by solving \eqref{EFE1}.
Before studying the variation of the magnetic field during the time evolution, we first investigate the variation of the magnetic field energy with different parameters to confirm the effectiveness of the model and the method. The magnetic field energy $E_M$ is defined as 
\begin{equation*}
	E_M=\int_V \left|\bdB\right|^2 d V,
\end{equation*}
where $V$ denotes the tachocline and the convection zone $(r_1 \leq r \leq r_3)$.

With $R_\alpha = 30$ fixed, we respectively present the energy variation curves for $R_m = $10, 50, and 100 in Figure~\ref{fig:En_M}. From the figure, we can see that the variation of magnetic field energy has a similar trend for different $R_m$. In the initial state, the magnetic field energy fluctuates irregularly with time, but as time advances, the magnetic field energy tends to stabilize with a periodic variation.
The difference is that the larger $R_m$ is, the larger the magnetic field energy is at the same moment.
Moreover, when the system reaches stability, the amplitude of the magnetic field energy curve increases with the increasing of $R_m$, while the wavelength decreases.
The properties are consistent with those reported in \cite{Zhang2003,Chan2006,Cheng2020}, which also confirms the effectiveness of the model and method.

Then in Figure~\ref{fig:C1}, we show  the time evolution of the interface dynamo system by plotting the azimuthal field $\bdB_\phi$ in the $y-z$ plane when $R_m = 100$. Here the $y-z$ plane means the union of the two half planes with azimuthal angles $\phi=\pi/2$ and $\phi=3\pi/2$ in spherical coordinates. 
%And the results are shown in Figure~\ref{fig:C1}.
From the contour maps at different moments, it can be seen that the magnetic field generated by the dynamo system is mainly concentrated near the interface between the tachocline and the convection zone, and it changes periodically with respect to time $t$.
The characteristics reflected by the above results are consistent with the existing work \cite{Zhang2003, Chan2006, Cheng2020}. 

\section{Concluding remarks}\label{sect6}
We remodel the spherical interface dynamo problem with quasi-vacuum boundary condition, which is very suitable to be discretized with the edge element method. Based on the new model, we provide an efficient way to solve the interface dynamo system. We first demonstrate the well-posedness of the continuous system by the fixed point theorem.
Then, we discretize the problem using the edge element method and present the stability of the corresponding discretization scheme.
Finally, we give some numerical examples to show that the numerical scheme is convergent and the simulation results are consistent with the existing results in the literature. We believe that the model and the method studied in this paper shall provide a useful tool for investigating celestial magnetic fields.

\bibliographystyle{plain}
\bibliography{reference}
	
\end{document}